\definecolor{oma}{RGB}{211,230,235}
\DeclareMathOperator{\Int}{int}
\DeclareMathOperator{\co}{cone}
\DeclareMathOperator{\dist}{dist}
\DeclareMathOperator{\Ind}{Ind}
\DeclareMathOperator{\Dom}{dom}
\newcommand{\R}{\mathbb{R}}
\newcommand{\x}{{\vec x}}
\newcommand{\y}{{\vec y}}
\newcommand{\z}{{\vec z}}
\newcommand{\f}{{\vec f}}
\newcommand{\ba}{{\vec a}}
\newcommand{\bb}{{\vec b}}
\newcommand{\bc}{{\vec c}}
\newcommand{\be}{{\vec e}}
\newcommand{\bg}{{\vec g}}
\newcommand{\bp}{{\vec p}}
\newcommand{\bu}{{\vec u}}
\newcommand{\bv}{{\vec v}}
\newcommand{\bw}{{\vec w}}
\newcommand{\iprod}[2]{\langle#1,#2\rangle}
\newcommand{\sdot}[2]{{#1 #2}}
\newcommand{\e}		{\varepsilon}
\newcommand{\la}	{\langle}
\newcommand{\ra}	{\rangle}
\newcommand{\ee}	{{\epsilon}}
\newcommand{\XS}	{{X_\star}}
\newcommand{\cdp}	{{\partial_c}}
\newcommand{\xs}	{{\x^\star}}
\newcommand{\xk}	{{\x^k}}
\newcommand{\EndProof}	{\rule{2ex}{2ex}\xspace}
\newcommand{\bleq}[2]	{\begin{equation}\label{#1}{#2}\end{equation}}
\def\ag#1{{\color{black}#1}} %Alexander Gasnikov's text
\def\pd#1{{\color{black}#1}} %Pavel Dvurechensky's corrections
\begin{document}

\title*{Advances in Low-Memory Subgradient Optimization}
\titlerunning{Advances Low-Memory Subgradient Optimization}
\author{Pavel E. Dvurechensky, Alexander V. Gasnikov, Evgeni A. Nurminski and Fedor S. Stonyakin}
\authorrunning{P.E. Dvurechensky, A.V. Gasnikov, E.A. Nurminski and Fedor S. Stonyakin}
\institute{Pavel E. Dvurechensky \at Weierstrass Institute for Applied Analysis and Stochastic, Mohrenstr. 39, Berlin, 10117, Germany and Institute for Information Transmission Problems RAS, Bolshoy Karetny per. 19, build.1, Moscow, 127051, Russia \email{pavel.dvurechensky@wias-berlin.de}
\and Alexander V. Gasnikov \at Moscow Institute of Physics and Technology, 9 Institutskiy per., Dolgoprudny, Moscow Region, 141701, Russia \email{gasnikov@yandex.ru}
\and Evgeni A. Nurminski \at Far Eastern Federal University, Russky ostrov, Vladivostok, 690000, Russia \email{nurminskiy.ea@dvfu.ru}
\and Fedor S. Stonyakin \at V.I. Vernadsky Crimean Federal University, 4 V. Vernadsky Ave, Simferopol, 295007 and Moscow Institute of Physics and Technology, 9 Institutskiy per., Dolgoprudny, Moscow Region, 141701 \email{fedyor@mail.ru}
}
\maketitle

\begin{abstract}
\makeatletter{}
One of the main goals in the development of non-smooth optimization is to cope
with high dimensional problems by decomposition, duality or Lagrangian relaxation which greatly
reduces the number of variables at the cost of worsening differentiability of objective or constraints.
Small or medium dimensionality of resulting non-smooth problems allows
to use bundle-type algorithms to achieve higher rates of convergence and obtain higher accuracy,
which of course came at the cost of additional memory requirements, typically of the order of \(n^2\),
where \(n\) is the number of variables of non-smooth problem.
However with the rapid development of more and more sophisticated models in industry, economy,
finance, et all such memory requirements are becoming too hard to satisfy.
It raised the interest in subgradient-based low-memory algorithms and later developments in this area
significantly improved over their early variants still preserving \(O(n)\) memory requirements. 
To review these developments this chapter is devoted to the black-box subgradient algorithms
with the minimal requirements for the storage of auxiliary results, which are necessary to execute these algorithms.
To provide historical perspective this survey starts with the original result of N.Z. Shor
which opened this field with the application to the classical transportation problem.
The theoretical complexity bounds for smooth and non-smooth convex and quasi-convex optimization problems
are briefly exposed in what follows to introduce to the relevant fundamentals of non-smooth optimization.
Special attention in this section is given to the adaptive step-size policy which
aims to attain lowest complexity bounds. 
Unfortunately the non-differentiability  of objective function in convex optimization
essentially slows down the theoretical low bounds for the rate of convergence in subgradient optimization
compared to the smooth case but there are different modern techniques that allow
to solve non-smooth convex optimization problems faster then dictate lower complexity bounds.
In this work the particular attention is given to Nesterov smoothing technique, Nesterov Universal approach,
and Legendre (saddle point) representation approach.
The new results on Universal Mirror Prox algorithms represent the original parts of the survey.
To demonstrate application of non-smooth convex optimization algorithms for solution of huge-scale extremal problems we consider
convex optimization problems with non-smooth functional constraints and propose two adaptive Mirror Descent methods.
The first method is of primal-dual variety and proved to be optimal in terms of lower oracle bounds for the
class of Lipschitz-continuous convex objective and constraints.
The advantages of application of this method to sparse Truss Topology Design problem are discussed in certain details.
The second method can be applied for solution of convex and quasi-convex optimization problems and
is optimal in a sense of complexity bounds.
The conclusion part of the survey contains the important references that characterize recent developments of non-smooth convex
optimization.
\end{abstract}
%%=====================================
\section*{Introduction}
\label{intro}
%%=====================================
We consider a finite-dimensional non-differentiable convex optimization problem (COP)
\bleq{copt}{ \min_{x\in E} f(x) = f_\star = f(\xs), \xs \in X_\star\,,}
where
\(E\) denotes a finite-dimensional space of primal variables and \(f:E\to\R\) is a finite convex function,
not necessarily differentiable.
For a given point \(\x\) the subgradient oracul returns value of objective function at that point \(f(\x)\)
and subgradient \(g \in \partial f(\x)\).
We do not make any assumption about the choice
of \(g\) from \( \partial f(\x)\).
As we are interested in computational issues related to solving (\ref{copt})
mainly we assume that this problem is solvable and has nonempty and bounded set of solutions \(X_\star\).

This problem enjoys a considerable popularity due to its important theoretical properties and numerous
applications in large-scale structured optimization, discrete optimization,
exact penalization in constrained optimization, and others.
Non-smooth optimization theory made it possible to solve in an efficient way classical discrete min-max problems
\cite{ddm2002}, \(l_1\)-approximation and others, at the same time opening new approaches
in bi-level, monotropic programming, two-stage stochastic optimization,
to name a few.

As a major steps in:the development
of different algorithmic ideas we can start with the subgradient algorithm due
to Shor (see \cite{Shor2012} for the overview and references to earliest works).
%%=====================================
\section{Example Application: Transportation Problem and The First Subgradient Algorithm}
\label{exap:trans}
%%=====================================
From utilitarian point of view the development of non-smooth (convex) optimization
started with the classical transportation problem
\bleq{trans-prob}{
\begin{array}{c}
 \min\ \sum_{i=1}^m \sum_{j=1}^n c_{ij} x_{ij} \\
\sum_{i=1}^m x_{ij} = a_j,\ j=1,2,\dots,n; \\
\sum_{j=1}^n x_{ij} = b_i,\ i=1,2,\dots,m \\
x_{ij} \geq 0, i=1,2,\dots,m; j=1,2,\dots,n
\end{array}
}
which is widely used in many applications.

By dualizing this problem with respect to balancing constrains we can
convert (\ref{trans-prob}) into dual problem of the kind
\bleq{dual-trans-prob}{ \max\ \Phi(\bu, \bv)}
where \(\bu = (u_i, i=1,2,\dots,m); \bv = (v_j, j=1,2,\dots,n) \)
are dual variables associated with the balancing constraints in (\ref{trans-prob})
and \( \Phi(\bu, \bv) \) is the dual function defined as
\bleq{fun-trans-prob}{ \Phi(\bu, \bv) = \inf_{\x \geq 0 } L(\x, \bu, \bv)}
and \( L(\x, \bu, \bv) \) is the Lagrange function of the problem:
\[
L(\x, \bu, \bv) = \sum_{i=1}^m \sum_{j=1}^n c_{ij} x_{ij} + \\
\sum_{j=1}^n u_j (\sum_{i=1}^m x_{ij} - a_j ) + \sum_{i=1}^m v_i (\sum_{j=1}^n x_{ij} - b_i ).
\]
By rearranging terms in this expression we can obtain the following expression for the dual function
\bleq{dualfun}{
\begin{array}{c}
\Phi(\bu, \bv) = -m\sum_{j=1}^n u_j a_j - n\sum_{i=1}^m v_i b_i +
\sum_{i=1}^m \sum_{j=1}^n \inf_{\x \geq 0} x_{ij} \{ c_{ij} + u_j + v_i \} = \\
- m\sum_{j=1}^n u_j a_j - n\sum_{i=1}^m v_i b_i - \Ind_D(\bu, \bv),
\end{array}
}
where
\bleq{indi}{
\Ind_D(\bu, \bv) = \left\{
\begin{array}{ll}
0 & \mbox{ when } c_{ij} + u_i + v_j \geq 0;
\\
\infty & \mbox{ otherwise.}
\end{array}
\right.
}
is the indicator function of the set \(D = \{ \bu, \bv: c_{ij} + u_j + v_i \geq 0,
i=1,2,\dots,m; j=1,2,\dots,n \} \) which is the feasible set of the dual problem.

Of course, by explicitely writing feasibility constraints for (\ref{dual-trans-prob})
we obtain the linear dual transportation problem
with a fewer variables but with much higher number of constraints.
This is bad news for textbook simplex method so many specialized algorithms were developed,
one of them was simple-minded method of generalized gradient which started the development of non-smooth optimization.

This method relies on subgradient of concave function \(\Phi(\bu, \bv)\)
 which we can transform into convex just by changing signs and replacing \(\inf\) with \(\sup\)
\begin{eqnarray*}
& \Phi(\bu, \bv) = m\sum_{j=1}^n u_j a_j + n\sum_{i=1}^m v_i b_i + \\
& \sum_{i=1}^m \sum_{j=1}^n \sup_{\x \geq 0} x_{ij} \{ c_{ij} + u_j + v_i \} = \\
& = m\sum_{j=1}^n u_j a_j + n\sum_{i=1}^m v_i b_i + \Ind_D(\bu, \bv),
\label{dufun}
\end{eqnarray*}
and ask for its {\em minimization}.

According to convex analysis \cite{TR1970}
the subdifferential \(\cdp \Phi(\bu, \bv) \) exists for any \(\bv, \bu \in \Int \Dom(\Ind_D)\),
and in this case just equals to the (constant) vector
\(g_L = (\bg_\bu, \bg_\bv) = (\ba, \bb)\) of a linear objective in the interior of \(D\).
The situation becomes more complicated when \(\bu, bv\) happens to be at the boundary of \(D\),
the subdifferential set ceases to be a singleton and becomes even unbounded,
roughly speaking certain linear manifolds are added to \(g_L\) but we will not go into details here.
The difficulty is that if we mimic gradient method of the kind
\bleq{sgrad}{
\bu^{k+1} = \bu^{k} - \lambda g_L^u = \bu^{k} - \lambda \ba;
\bv^{k+1} = \bv^{k} - \lambda g_L^v = \bv^{k} - \lambda \bb;
k = 0,1,\dots
}
with a certain step-size \(\lambda > 0\),
we inevitably violate the dual feasibility constraints as \(\ba, \bb > 0.\)
Than the dual function
(\ref{dufun})
becomes undefined and correspondently the subdifferential set becomes undefined as well.

There are at least two simple ways to overcome this difficulty.
One is to incorporate in the gradient method certain operations which restore feasibility
and the appropriate candidate for it is the orthogonal projection operation
where one can make use of the special structure of constraints and sparsity.
However it will still require computing projection operator
of the kind \(B^T(B BT)^{-1}B\) for basis matrices \(B\) with rather uncertain number of iteration and
of matrices of the size around \((n+m)\times(n+m)\).
Neither computers speed nor memory sizes at that time where not up to demands
to solve problems of \(n+m \approx 10^4\) which was required by GOSPLAN!

The second ingenious way was to take into account that if
\(\sum_{j=1}^n a_j = \sum_{i=1}^m b_i = V\),
which is required anyway for solvability of transportation problem in a closed form.
The flow variables may be uniformally bounded by \(V\)
and the dual function can be redefined as
\begin{eqnarray*}
&\Phi_V(\bu, \bv) = m\sum_{j=1}^n u_j a_j + n\sum_{i=1}^m v_i b_i - \\
& \sum_{i=1}^m \sum_{j=1}^n \max_{0 \leq \x \leq V} x_{ij} \{ c_{ij} + u_j + v_i \} = \\
& = m\sum_{j=1}^n u_j a_j + n\sum_{i=1}^m v_i b_i + P_V(\bu,\bv)
\end{eqnarray*}
where the penalty function \(P_V(\bu,\bv)\)
is easily computed by component-wise maximization:
\begin{eqnarray*}
& P_V(\bu, \bv) = \sum_{i=1}^m \sum_{j=1}^n \max_{x_{ij} \in [ 0, V ]} x_{ij} \{ c_{ij} + u_j + v_i \} =
\\
& \sum_{i=1}^m \sum_{j=1}^n V \{ c_{ij} + u_j + v_i \}_+
\end{eqnarray*}
where \(\{\cdot\}_+ = \max\{0, \cdot\}\).
Than the dual objective function becomes finite, the optimization problem --- unconstrained and we can use
simple subgradient method with very low requirements for memory and computations.

Actually even tighter bounds \( \x_{ij} \leq \min(a_i, b_j) \) can be imposed on the flow variables
which may be advantageous for computational reasons.

In both cases there is a fundamental problem of recovering
optimal primal \(n\times m\) primal solution from \(n + m\) dual.
This problem was studied by many authors and recent advances in this area
can be studied from the excellent paper by A. Nedic and A. Ozdoglar
\cite{nedoz09}.
Theoretically speaking, nonzero positive values of \(c_{ij} + u_j^\star + v_i^\star \),
where \(\bu^\star, \bv^\star \) are the {\em exact} optimal solutions of the dual problem
(\ref{dual-trans-prob})
signal that the corresponding optimal primal flow \(x_{ij}^\star\) is equal to zero.
Hopefully after excluding these variables we obtain nondegenerate basis
and can compute the remaining variables
by simple and efficient linear algebra,
especially taking into account the uni-modularity of basis.

However the theoretical gap between zeros and non-zeros
is exponentially small even for modest length integer data
therefore we need an accuracy unattainable by modern 64-128 bits hardware.
Also the real life computations are always accompanied by numerical noise
and we face the hard choice in fact guessing which dual constraints are active and which are not.

To connect the transportation problem with non-smooth optimization notice that
the penalty function \(P_V(\bu, \bv)\) is finite with the subdifferential \(\cdp P_V(\bu, \bv)\)
which can be represented as a set of \(n\times m\)
matrices
\[
{\vec g}_{ij} = \left\{
\begin{array}{ll}
V & \mbox{ if } c_{ij} + u_j + v_i > 0 \\
0 & \mbox{ if } c_{ij} + u_j + v_i < 0 \\
\co(0,V) & \mbox{ if } c_{ij} + u_j + v_i = 0
\end{array}
\right.
\]
so the subdifferential set is a convex hull of up to \(2^{(n+m)}\)
extreme points --- enormous number even for a modest size transportation problem.
Nevertheless it is easy to get at least single member of subdifferential and consider
the simplest version of subgradient method:
\[
\x^{k+1} = \x^k - \lambda \bar \bg^k, k = 0, 1, \dots
\]
where \(\x^0\) is a given starting point, \(\lambda > 0 \) --- fixed step-size and
\(\bar \bg^k = \bg^k/\|\bg^k\|\) is a normalized subgradient \(\bg^k \in \partial f(\x^k)\).
Of course we assume that \(\bg^k \neq 0\) otherwise \(\x^k\) is already a solution.

Of course, there is no hope of classical convergence result such that \(\x^k \to \xs \in \XS\),
but the remarkable theorem of Shor \cite{Shor79} establishes that
this simplest algorithm determines at least the approximate solution.
%%=====================================
%\section{The First Subgradient Algorithm}
%\label{survey:subgrad}
%%=====================================
As a major step in the development of different algorithmic ideas we can start with the subgradient algorithm due
to Shor (see \cite{Shor2012} for the overview and references to earliest works).
Of course, there is no hope of classical convergence result such that \(\x^k \to \xs \in \XS\),
but the remarkable theorem of Shor \cite{Shor79} establishes that
this very simple algorithm provides an approximate solution of (\ref{copt}) at least theoretically.
\begin{theorem}
Let \(f\) is a finite convex function with a subdifferential \(\partial f\)
and the sequence \(\{\xk\}\) is obtained by the recursive rule
\bleq{simgrad}{ x^{k+1} = \xk - \lambda g^k_\nu, k = 0,1, \dots }
with \(\lambda > 0\) and \(g^k_\nu = g^k/\|g^k\|, g^k \in \partial f(\xk),\ g^k \neq 0\) is a normalized subgradient
at the point \(x^k\).
Then for any \(\epsilon > 0\) there is an infinite set \(Z_\epsilon \subset Z\)
such that for any \(k \in Z_\epsilon\)
\[
f(\tilde \xk) = f(\xk) \mbox{ and } \dist(\tilde \xk, X_\star) \leq \lambda(1+\epsilon)/2.
\]
\end{theorem}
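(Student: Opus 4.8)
The plan is to reduce everything to the classical quadratic recursion for the squared distance to a fixed optimal point, and then to convert a bound on a one-dimensional projection into a genuine nearness statement on a level set. Fix any minimizer $x_\star \in X_\star$, so $f(x_\star)=f_\star$. Since $\norm{g^k_\nu}=1$, expanding the square in the recursion gives
\[
\norm{x^{k+1}-x_\star}^2 = \norm{x^k-x_\star}^2 - 2\lambda\,\iprod{g^k_\nu}{x^k-x_\star} + \lambda^2 .
\]
Writing $s_k := \iprod{g^k_\nu}{x^k-x_\star}$, the whole argument hinges on controlling $s_k$, which by the subgradient inequality satisfies $s_k \ge (f(x^k)-f_\star)/\norm{g^k} \ge 0$.

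First I would extract the infinite index set by contradiction. Suppose that $s_k \le \lambda(1+\epsilon)/2$ holds only finitely often; then for all $k$ past some $N$ we have $s_k > \lambda(1+\epsilon)/2$, and substituting into the recursion yields $\norm{x^{k+1}-x_\star}^2 < \norm{x^k-x_\star}^2 - \epsilon\lambda^2$. Telescoping forces $\norm{x^k-x_\star}^2 \to -\infty$, which is impossible for a sequence of norms. Hence the set $Z_\epsilon := \{\,k : s_k \le \lambda(1+\epsilon)/2\,\}$ is infinite.

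The delicate part, and the step I expect to be the main obstacle, is passing from the scalar bound $s_k \le \lambda(1+\epsilon)/2$ to a point on the level set $\{y : f(y) = f(x^k)\}$ that is genuinely close to $X_\star$: note that $s_k$ is only the length of the projection of $x^k-x_\star$ onto the direction $g^k_\nu$, not a distance to the solution set. For $k \in Z_\epsilon$ I would introduce the supporting hyperplane $\Pi_k = \{y : \iprod{g^k_\nu}{y - x^k}=0\}$ through $x^k$, on which the subgradient inequality forces $f \ge f(x^k)$. The orthogonal projection of $x_\star$ onto $\Pi_k$ is $\hat x^k = x_\star + s_k g^k_\nu$, and a direct check shows both $\norm{\hat x^k - x_\star} = s_k$ and, since $\iprod{g^k}{\hat x^k - x^k}=0$, that $f(\hat x^k) \ge f(x^k)$.

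Finally I would restore the exact level value by a continuity argument. Along the segment $[x_\star,\hat x^k]$ the finite convex, hence continuous, function $\phi(t) = f\big((1-t)x_\star + t\hat x^k\big)$ runs from $\phi(0)=f_\star \le f(x^k)$ up to $\phi(1)=f(\hat x^k) \ge f(x^k)$, so the intermediate value theorem supplies $t^\star$ with $\phi(t^\star)=f(x^k)$. Setting $\tilde x^k = (1-t^\star)x_\star + t^\star \hat x^k$ then gives $f(\tilde x^k)=f(x^k)$, and since $\tilde x^k$ lies on the segment,
\[
\dist(\tilde x^k, X_\star) \le \norm{\tilde x^k - x_\star} = t^\star\,\norm{\hat x^k - x_\star} \le s_k \le \lambda(1+\epsilon)/2 ,
\]
which is exactly the claimed conclusion for every $k \in Z_\epsilon$.
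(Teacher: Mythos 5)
Your proof is correct and follows essentially the same route as the paper: the squared-distance recursion $\norm{x^{k+1}-x_\star}^2=\norm{x^k-x_\star}^2+\lambda^2-2\lambda s_k$, the interpretation of $s_k$ as the distance from $x_\star$ to the supporting hyperplane through $x^k$, the tail contradiction, and the passage to the level set $\{y: f(y)=f(x^k)\}$. If anything, your write-up tidies two points the paper leaves implicit --- you obtain the infinite set $Z_\epsilon$ in one stroke by contradicting the tail rather than restarting the argument repeatedly, and you construct $\tilde x^k$ explicitly via the intermediate value theorem on the segment $[x_\star,\hat x^k]$ instead of asserting that the minimizers over $\{f=f(x^k)\}$ and $\{f\ge f(x^k)\}$ coincide.
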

The statement of the theorem is illustrated on Fig. \ref{constep}
together with the idea of the proof.
\begin{figure}
\begin{center}
\includegraphics[scale=1.]{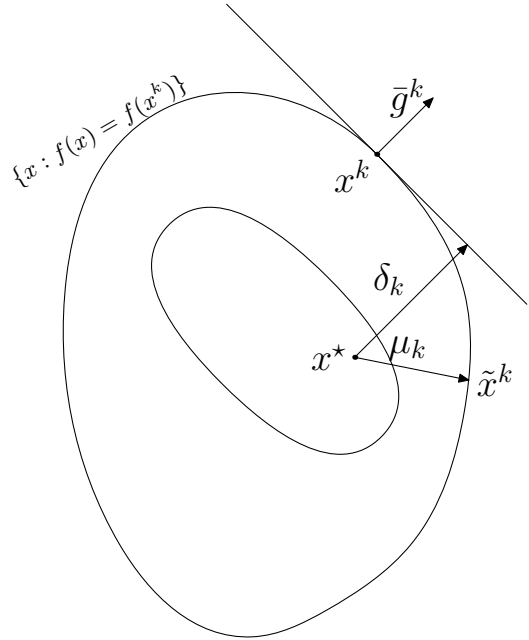}
\end{center}
\caption{The statement and the idea of the proof of Shor theorem}
\label{constep}
\end{figure}
The detailed proof of the theorem goes like following:
Let \(\xs \in \XS\) and estimate
\[
\|\x^{k+1} - \xs \|^2 = \|\xk - \xs - \lambda \bg^k_\nu \|^2 =
\|\xk - \xs \|^2 + \lambda^2 - 2\lambda \sdot{\bar \bg^k}{(\xk - \xs)}.
\]
The last term in fact equals
\[
\begin{array}[t]{c}\min \\ \z \in H_k \end{array}
 \| \xs - \z \|^2 = \| \xs - \z^k\|^2 = \delta_k,
\]
where \( H_k = \{ \z: \sdot{\z}{g^k_\nu} = \sdot{\xk}{g^k_\nu} \)
is a hyperplane, orthogonal to \(g^k_\nu\) and passing through the point \(\xk\),
so
\bleq{del}{ \|\x^{k+1} - \xs \|^2 = \|\xk - \xs \|^2 + \lambda^2 - 2\lambda \delta_k, \ k = 0,1,2, \dots }
If \( \lambda^2 - 2\lambda \delta_k \leq -\lambda^2 \epsilon \) for any \(k\in Z\) then
\bleq{del2}{ \|\x^{k+1} - \xs \|^2 \leq \|\xk - \xs \|^2 - \lambda^2 \epsilon, \ k = 0,1,2, \dots }
therefore \bleq{delk}{ 0 \leq \| \x^{k+1} - \xs \|^2 \leq \| \x^0 - \xs \|^2 \leq - k \lambda^2 \epsilon \to -\infty }
when \(k \to \infty\).
This contradiction proves that there is \(k_0\) such that
\( \lambda^2 - 2\lambda \delta_{k_0} > -\lambda^2 \epsilon \)
or \(\delta_{k_0} < \lambda ( 1 + \epsilon )/2 \).

To complete the proof notice that by convexity \(f(z^{k_0}) \geq f(\x_{k_0})\) and therefore
\bleq{funeq}{
\min_{z: f(z) = f(\x^{k_0})} \| \xs - z \|^2
=
\| \xs - \bar z^{k_0} \|^2
=
\min_{z: f(z) \geq f(x^{k_0})} \| \xs - z \|^2
\leq
\| \xs - z^{k_0} \|^2 = \delta_{k_0}.
}
By setting \(\tilde \x^0 = z^{k_0}\) we obtain \(\| \xs - \tilde \x^0 \|^2 < \lambda ( 1 + \epsilon )/2 \).

By replacing \(x^0\) in (\ref{delk}) by \(\tilde \x^0\) and repeating the reasoning above we obtain \(\tilde x^1\)
such that \(\| \xs - \tilde \x^1 \|^2 < \lambda ( 1 + \epsilon )/2 \), then in the same manner \(\tilde \x^2, \tilde \x^3 \) and so on
with \(\| \xs - \tilde \x^k \|^2 < \lambda ( 1 + \epsilon )/2, k = 2,3, \dots \)
which complete the proof.
\EndProof
%%=====================================
\section{Complexity Results for Convex Optimization}
\label{complexity}
%%=====================================
At this section we describe the complexity results for non-smooth convex optimization problems.
Most of the results mentioned below can be found in books
\cite{nemirovsky1983problem,polyak1987introduction,nesterov2018lectures,bubeck2015,ben-tal2015lectures}.
We start with the  \ag{\textbf{`small dimensional problems'}, when
$$N\ge n=\dim \x,$$}
where $N$ is a number of oracle calls (number of subgradient calculations
or/and calculations of separation hyperplane to some simple set \ag{at a given point}).

Let's consider convex optimization problem
\begin{equation}
\label{eq1}
f\left( \x \right)\to \mathop {\min }\limits_{\x\in Q},
\end{equation}
where $Q$ -- is a compact and simple set (it's significant here!).
\ag{Based on at least $N$ subgradient calculations (in general, oracle calls) w}e would like to find such a point $\x^N$ that
\[
f\left( {\x^N} \right)-f_\ast \le \varepsilon,
\]
where $f_\ast =f\left( {\x_\ast } \right)$ is an optimal value of function in
(\ref{eq1}), $\x_\ast $ -- the solution of (\ref{eq1}).
The lower and the upper bounds for
the oracle complexity is (\pd{up to a multiplier, which has logarithmic dependence on some characteristic of the set $Q$})
\[
N\sim n\ln \left( {{\Delta f} \mathord{\left/ {\vphantom {{\Delta f}
\varepsilon }} \right. \kern-\nulldelimiterspace} \varepsilon } \right),
\]
where $\Delta f=\mathop {\sup }\limits_{\ag{\x,\y}\in Q} \left\{ {f\left( \y
\right)-f\left( \x \right)} \right\}$.
The center of gravity method \cite{Levin1965,Newman1965} converges according to this estimate.
The center of gravity method in $n=1$ is a simple binary search method \cite{brent1973algorithms}.
But in $n>1$ this method is hard to implement.
The complexity of iteration is
too high, because we required center of gravity oracle \cite{bubeck2015}.
\ag{Well known} ellipsoid method \cite{Shor85,nemirovsky1983problem} requires\footnote{\ag{Here and below for all (large) $n$: $\tilde{ O}(g(n)) \le C\cdot(\ln n)^r g(n)$ with some constants $C>0$ and $r\ge 0$. Typically, $r = 1$. If $r=0$, then $\tilde{O}(\cdot) = O(\cdot)$.}}
$N=\tilde{O}\left( {n^2\ln \left( {{\Delta f} \mathord{\left/
{\vphantom {{\Delta f} \varepsilon }} \right. \kern-\nulldelimiterspace}
\varepsilon } \right)} \right)$ oracle calls and $O\left( {n^2}
\right)$ iteration complexity.
% Here and below $\tilde {{\rm O}}\left(
% \right)$ means ${\rm O}\left( \right)$ up to ${\rm O}\left( {\ln ^{{\rm
% O}\left( 1 \right)}n} \right)$-factor (typically this factor is just ${\rm
% O}\left( {\ln n} \right))$.
In \cite{Vaidya1989,bubeck2015} a special version
of cutting plane method was proposed.
This method (Vayda's method) requires
$N=\tilde{O}\left( {n\ln \left( {{\Delta f} \mathord{\left/
{\vphantom {{\Delta f} \varepsilon }} \right. \kern-\nulldelimiterspace}
\varepsilon } \right)} \right)$ oracle calls and has iteration complexity
$\tilde {{\rm O}}\left( {n^{2.37}} \right)$.
In the work
\cite{lee2015faster} there proposed a method with $N=\tilde {
O}\left( {n\ln \left( {{\Delta f} \mathord{\left/ {\vphantom {{\Delta f}
\varepsilon }} \right. \kern-\nulldelimiterspace} \varepsilon } \right)}
\right)$ oracle calls and iteration complexity $\tilde {O}\left( {n^2}
\right)$. Unfortunately, for the moment it's not obvious that this method is
very practical one due to the large log-factors in $\tilde {O}\left(
\right)$.

Based on ellipsoid method in the late 70-th Leonid Khachyan showed
\cite{khachiyan1979polynomial} that LP is in P in byte complexity. \pd{Let us shortly explain the idea}.
\pd{The main question is whether} $A\x\le \bb$ is solvable \pd{or not, where $n=\dim \x$, $m=\dim \bb$ and all
elements of $A$ and $\bb$ are integers.}
\pd{We would like also} to find one of the exact
solutions $\x_\ast $.
This problem up to a logarithmic factor in complexity
is equivalent to the problem to find the exact solution of LP problem
$\left\langle {\bc,\x} \right\rangle \to \mathop {\min }\limits_{A\x\le \bb} $
with integer $A$, $\bb$ and $\bc$.
\pd{We consider only inequality constraints as it is known that to} find the exact solution of $A\x=\bb$ one can
use polynomial Gauss \ag{elimination} algorithm with $O\left( {n^3} \right)$ \ag{arithmetic operations (a.o.)} complexity. %\ag{W}hat is about $A\x\le \bb$?

\ag{Let us} introduce
\[
\Lambda =\sum\limits_{i,j=1,1}^{m,n} {\log _2 \left| {a_{ij} } \right|}
+\sum\limits_{i=1}^m {\log _2 \left| {\bb_i } \right|} +\log _2 \left( {mn}
\right)+1.
\]
If $A\x\le \bb_{ }$ is compatible, then there exists such $\x_\ast $ that
$\left\| {\x_\ast } \right\|_\infty \le 2^\Lambda $, $A\x_\ast \le \bb$
otherwise
\[
\mathop {\min }\limits_\x \left\| {(A\x-\bb)_+} \right\|_\infty \ge 2^{-\left(
{\Lambda -1} \right)}.
\]
% \pd{Why this holds?}
\pd{Thus, the question of compatibility of $A\x\le \bb$  is equivalent to the problem of finding minimum of the following} %So one should reformulate as 
non-smooth convex optimization problem
\[
\left\| {(A\x-\bb)_{+}} \right\|_\infty \to \mathop {\min }\limits_{\left\| {\x_\ast }
\right\|_\infty \le 2^\Lambda }.
\]
\pd{The approach of \cite{khachiyan1979polynomial}}  is to apply ellipsoid method for this problem with $\varepsilon
=2^{-\Lambda }$.
\pd{From the complexity of this method, it follows that } in $O\left( {n\Lambda } \right)$-bit arithmetic with $\tilde
{O}\left( {mn+n^2} \right)$ cost of PC memory one can find $\x_\ast $
(if \pd{it exists}) \pd{in} $\tilde {O}\left( {n^3\left( {n^2+m}
\right)\Lambda } \right)$ a.o.

\ag{N}ote, that in the ideal arithmetic with real
numbers it is still an open question \cite{blum2012complexity} \pd{whether} it \pd{is}
possible to find the exact solution of LP \pd{an} problem (with \pd{the data given by } real numbers) in
polynomial time in the ideal arithmetic ($\pi \cdot e$ -- costs $O\left( 1
\right))$. 
% \pd{maybe give a more strict definition of the ideal arithmetic?}

\ag{Now let us consider \textbf{`large dimensional problems'}
$$N\le n=\dim \x.$$} Table \ref{tabl} describes (for more details see
\cite{ben-tal2015lectures,bubeck2015,nesterov2018lectures})
optimal estimates for the number of oracle
calls for convex optimization problem (\ref{eq1}) in the case
when \( N\le n. \)
Now $Q$ is not necessarily compact set.

\begin{table}[htbp]
\caption{Optimal estimates for the number of oracle calls}
\label{tabl}
\begin{center}
\begin{tabular}{|p{100pt}|p{100pt}|p{114pt}|}
\hline
$N\le n$&
$\left| {f\left( \y \right)-f\left( \x \right)} \right|\le M\left\| {\y-\x} \right\|$&
$\left\| {\nabla f\left( \y \right)-\nabla f\left( \x \right)} \right\|_\ast \le L\left\| {\y-\x} \right\|$ \\
\hline
$f\left( \x \right)$ convex&
$O\left( {\frac{M^2R^2}{\varepsilon ^2}} \right)$&
$O\left( {\sqrt {\frac{LR^2}{\varepsilon }} } \right)$ \\
\hline
$f\left( \x \right)
\quad
\mu -$strongly convex in \ag{$\left\|\cdot \right\|$}-norm&
$\tilde {O}\left( {\frac{M^2}{\mu \varepsilon }} \right)$&
$\tilde {O}\left( {\sqrt {\frac{L}{\mu }} \left\lceil {\ln \left( {\frac{\mu R^2}{\varepsilon }} \right)} \right\rceil } \right)
\quad
\left( {\forall \;N} \right)$ \\
\hline
\end{tabular}
\end{center}
\end{table}
Here $R$ is a ``distance'' (up to a $\ln n$-factor) between starting point
and the nearest solution
\[
R=\tilde {O}\left( {\left\| {\x^0-\x_\ast } \right\|} \right).
\]
Let's describe optimal method in the most simple case: \ag{$Q={\R}^n$,
$\left\|\cdot \right\|=\left\|\cdot \right\|_2$} \cite{polyak1987introduction,nesterov2009primal-dual}.
Define
\[
B_2^n \left( {\x_\ast,R} \right)=\left\{ {\x\in {\ag{\R}}^n:\;\;\left\|
{\x-\x_\ast } \right\|_2 \le R} \right\}.
\]
The main iterative process is (for simplicity we'll denote arbitrary element
of $\partial f\left( \x \right)$ as $\nabla f\left( \x \right))$
\begin{equation}
\label{eq2}
\x^{k+1}\mbox{=}\;\x^k-h\nabla f\left( {\x^k} \right).
\end{equation}
Assume that under $\x\in B_2^n \left( {\x_\ast,\sqrt 2 R} \right)$
\begin{equation}
\label{eq3}
\left\| {\nabla f\left( \x \right)} \right\|_2 \le M,
\end{equation}
where $R=\left\| {\x^0-\x_\ast } \right\|_2 $.

Hence, from (\ref{eq2}), (\ref{eq3}) we have
\[
\left\| {\x-\x^{k+1}} \right\|_2^2 \mbox{=}\left\| {\x-\x^k+h\nabla f\left(
{\x^k} \right)} \right\|_2^2 =
\]
\[
=\;\left\| {\x-\x^k} \right\|_2^2 +2h\left\langle {\nabla f\left( {\x^k}
\right),\x-\x^k} \right\rangle +h^2\left\| {\nabla f\left( {\x^k} \right)}
\right\|_2^2 \le
\]
\ag{
\begin{equation}
\label{KI}
\le \;\left\| {\x-\x^k} \right\|_2^2 +2h\left\langle {\nabla f\left( {\x^k}
\right),\x-\x^k} \right\rangle +h^2M^2.
\end{equation}
}
Here we choose $\x=\x_\ast $ (if $\ag{\x}_\ast $ isn't unique, we choose the nearest
$\x_\ast $ to $\x^0)$
\[
f\left( {\frac{1}{N}\sum\limits_{k=0}^{N-1} {\x^k} } \right)-f_\ast \le
\frac{1}{N}\sum\limits_{k=0}^{N-1} {f\left( {\x^k} \right)} -f\left( {\x_\ast
} \right)\le \frac{1}{N}\sum\limits_{k=0}^{N-1} {\left\langle {\nabla
f\left( {\x^k} \right),\x^k-\x_\ast } \right\rangle } \le
\]
\[
\le \frac{1}{2hN}\sum\limits_{k=0}^{N-1} {\left\{ {\left\| {\x_\ast -\x^k}
\right\|_2^2 -\left\| {\x_\ast -\x^{k+1}} \right\|_2^2 } \right\}}
+\frac{hM^2}{2}=
\]
\[
=\frac{1}{2hN}\left( {\left\| {\x_\ast -\x^0} \right\|_2^2 -\left\| {\x_\ast
-\x^N} \right\|_2^2 } \right)+\frac{hM^2}{2}.
\]
If
\begin{equation}
\label{eq4}
h=\frac{R}{M\sqrt N },
\quad
\bar {\x}^N=\frac{1}{N}\sum\limits_{k=0}^{N-1} {\ag{\x}^k},
\end{equation}
then
\begin{equation}
\label{eq5}
f\left( {\bar {\x}^N} \right)-f_\ast \le \frac{MR}{\sqrt N}.
\end{equation}
Note that the precise lower bound for fixed steps first-order methods for the class of convex optimization problems with (\ref{eq3}) \cite{Drori-Teboulle2016}
\[
f\left( {\x^N} \right)-f_\ast \ge \frac{MR}{\sqrt {N+1} }.
\]
Inequality (\ref{eq5}) means that (see also Table 1)
\[
N=\frac{M^2R^2}{\varepsilon ^2},
\quad
h=\frac{\varepsilon }{M^2}.
\]
So, one can mentioned that if we will use in (\ref{eq2})
\begin{equation}
\label{eq6}
\x^{k+1}\mbox{=}\;\x^k-h_k \nabla f\left( {\x^k} \right),
\quad
h_k =\;\frac{\varepsilon }{\left\| {\nabla f\left( {\x^k} \right)}
\right\|_2^2 }
\end{equation}
the result (\ref{eq5}) \ag{holds with} \cite{nesterov2009primal-dual}
\[
\bar {\x}^N=\frac{1}{\sum\limits_{k=0}^{N-1} {h_k } }\sum\limits_{k=0}^{N-1}
{h_k \x^k}.
\]
If we put in (\ref{eq6}),
\[
h_k =\frac{R}{\left\| {\nabla f\left( {\x^k} \right)} \right\|_2 \sqrt N },
\]
like in (\ref{eq4}), \pd{the result similar} to (\ref{eq5}) also \ag{holds}
\[
\mathop{ \min\limits_{k=0,...,N-1} f\left( {\x^k} \right)-f_\ast \le\frac{MR}{\sqrt N }}
\]
not only for the convex functions, but also for quasi-convex functions
\cite{Polyak1969,nesterov1989}:

\[
f(\alpha \x + (1-\alpha)\y) \leq \max \left\{f(\x), f(\y) \right\} \text{ for all } \x, \y \in Q, \alpha \in
[0,1].
\]

Note that
\[
0\le \frac{1}{2hk}\left( {\left\| {\x_\ast -\x^0} \right\|_2^2 -\left\|
{\x_\ast -\x^k} \right\|_2^2 } \right)+\frac{hM^2}{2}\pd{.}
\]
Hence, for all $k=0,...,N$,
\[
\left\| {\x_\ast -\x^k} \right\|_2^2 \le \left\| {\x_\ast -\x^0} \right\|_2^2
+h^2M^2k\le 2\left\| {\x_\ast -\x^0} \right\|_2^2,
\]
therefore
\begin{equation}
\label{eq7}
\left\| {\x^k-\x_\ast } \right\|_2 \le \sqrt 2 \left\| {\x^0-\x_\ast }
\right\|_2,
\quad
k=0,...,N.
\end{equation}
Inequality (\ref{eq7}) justifies that we need assumption (\ref{eq3}) \ag{holds} only with
$\x\in B_2^n \left( {\x_\ast,\sqrt 2 R} \right)$.

For the general (constrained) case (\ref{eq1}) we introduce \pd{a} norm \ag{$\left\|\cdot \right\|$} \pd{and some}
prox-function $d\left( \x \right)\ge 0$, which is \pd{continuous and}
1-strongly convex \pd{with respect to} \ag{$\left\|\cdot\right\|$}\pd{, i.e. $d(y)-d(x) - \la d(x),y-x \ra \geq \frac{1}{2}\|x-y\|^2$, for all $x,y \in Q$.}
\pd{We also introduce} Bregman's divergence
\cite{ben-tal2015lectures}
\[
\ag{V[\x](\y)=d\left( \y \right)-d\left( \x \right)-\left\langle
{\nabla d\left( \x \right),\y-\x} \right\rangle.}
\]
We \pd{set} \ag{$R^2=V[\x^0](\x_{\ast})$}, where $\x_\ast $ -- is solution
of (\ref{eq1}) (if $\x_\ast $ isn't unique then we assume that $\x_\ast $ is minimized
\ag{$V[\x^0](\x_{\ast})$}. \ag{The natural generalization of iteration process \eqref{eq2} is Mirror Descent algorithm \cite{nemirovskii1979efficient,ben-tal2015lectures}} which iterates as
\[
\x^{k+1}\mbox{=}\;\mbox{Mirr}_{\x^k} \left( {h\nabla f\left( {\x^k} \right)}
\right),
\quad
\mbox{Mirr}_{\x^k} \left( \mbox{v} \right)=\arg \mathop {\min }\limits_{\x\in
Q} \left\{ {\left\langle {\mbox{\bv},\x-\x^k} \right\rangle +\pd{V[\x^k]\left( {\x}
\right)}} \right\}.
\]

\ag{For this iteration process} instead of \ag{\eqref{KI}} we have
\[
2\pd{V[\x^{k+1}]\left( \x\right)}\le \;2\pd{V [\x^k]\left(\x \right)}+2h\left\langle
{\nabla f\left( {\x^k} \right),\x-\x^k} \right\rangle +h^2M^2,
\]
\ag{where $\left\|
{\nabla f\left( \x \right)} \right\|_\ast \le M$ for all $x: \pd{V[\x](\x_*) \le 2V[\x^0](\x_*)} \ag{= 2R^2}$} \pd{, see also Section \ref{large-dim}.}

\ag{An}alogues of formulas (\ref{eq4}), (\ref{eq5}), \ag{\eqref{eq7}} are also valid
\[
f\left( {\bar {\x}^N} \right)-f_\ast \le \frac{\sqrt 2 MR}{\sqrt N }, 
\]
where
$$\ag{\quad \bar {\x}^N=\frac{1}{N}\sum\limits_{k=0}^{N-1} {\x^k}, \quad h=\frac{\varepsilon }{M^2}}$$
\ag{and}
\[
\left\| {\x^k-\x_\ast } \right\|\le 2R\ag{,  \quad
k=0,...,N.}
\]
\ag{In \cite{ben-tal2015lectures} authors discus how to choose $d(\x)$ for different simple convex sets $Q$. One of these examples (unit simplex) will considered below. Note, that in all these examples one can guarantees that \cite{ben-tal2015lectures}:
\[
% \frac{1}{2}\left\| {\x_\ast -\x^0} \right\|^2\le
R\le C\sqrt{\ln n}\cdot \left\|
{\x_\ast -\x^0} \right\|.
\]
}
\ag{
Note, that if $Q={\R}^n$,
$\left\|\cdot \right\|=\left\|\cdot \right\|_2$ then $d(\x) = \frac{1}{2}\|\x\|_2^2$, $V[\x](\y)=\frac{1}{2}\|\y-\x\|_2^2$,
$$\x^{k+1} = \mbox{Mirr}_{\x^k} \left(h\nabla f\left( {\x^k} \right) \right)=\arg \mathop {\min }\limits_{\x\in
\R^n} \left\{h {\left\langle {\nabla f\left( {\x^k} \right),\x-\x^k} \right\rangle +\frac{1}{2}\|\x-\x^k\|_2^2} \right\} =$$
$$= \x^k-h \nabla f\left( {\x^k} \right),$$
that corresponds to the standard gradient-type iteration process~\eqref{eq2}.}

\textbf{Example (unit simplex).} \textit{We have
\[
Q=S_n \left( 1 \right)=\left\{ {\x\in {\rm R}_+^n:\;\;\sum\limits_{i=1}^n
{\x_i } =1} \right\},
\quad
\left\| {\nabla f\left( \x \right)} \right\|_\infty \le M_\infty,
\quad
\x\in Q,
\]
\[
\ag{\left\|\cdot \right\|=\left\|\cdot \right\|_1},
\quad
d\left( \x \right)=\ln n+\sum\limits_{i=1}^n {\x_i \ln \x_i },
\quad
h=M_\infty ^{-1} \sqrt {{2\ln n} \mathord{\left/ {\vphantom {{2\ln n} N}}
\right. \kern-\nulldelimiterspace} N},
\quad
\x_i^0 =1 \mathord{\left/ {\vphantom {1 n}} \right.
\kern-\nulldelimiterspace} n,
\quad
i=1,...,n.
\]
For $k=0,...,N-1$, $i=1,...,n$
\[
\x_i^{k+1} =\frac{\exp \left( {-h\sum\limits_{r=1}^k {\nabla _i f\left( {\x^r}
\right)} } \right)}{\sum\limits_{l=1}^n {\exp \left( {-h\sum\limits_{r=1}^k
{\nabla _l f\left( {\x^r} \right)} } \right)} }=\frac{\x_i^k \exp \left(
{-h\nabla _i f\left( {\x^k} \right)} \right)}{\sum\limits_{l=1}^n {\x_l^k \exp
\left( {-h\nabla _l f\left( {\x^k} \right)} \right)} }.
\]
The main result here is
\[
f\left( {\bar {\x}^N} \right)-f_\ast \le M_\infty \sqrt {\frac{2\ln n}{N}},
\quad
\bar {\x}^N=\frac{1}{N}\sum\limits_{k=0}^{N-1} {\x^k}.
\]
Note, that if we use \ag{$\left\|\cdot\right\|_2$}-norm and $d\left( \x
\right)=\frac{1}{2}\left\| {\x-\x^0} \right\|_2^2$ here, we will have \ag{higher
iteration complexity} (2-norm projections on unit simplex) and
\[
f\left( {\bar {\x}^N} \right)-f_\ast \le \frac{M_2 }{\sqrt N },
\quad
\left\| {\nabla f\left( \x \right)} \right\|_2 \le M_2,
\quad
\x\in Q.
\]
Since typically $M_2 ={\rm O}\left({\sqrt n M_\infty } \right)$, it is worth
to use \ag{$\left\|\cdot \right\|_1$}-norm.}

Assume now that $f(x)$ in (\ref{eq1}) is additionally $\mu $-strongly
convex in \ag{$\left\|\cdot \right\|_2$} norm:

\ag{
\[
f(\y) \ge f(\x) + \la \nabla f(\x), y-x \ra +  \frac{\mu }{2}\left\|\y-\x\right\|_2^2   \text{ for all } \x,\y\in Q.
\]
}

Let
\[
\x^{k+1}\mbox{=}\;\mbox{Mirr}_{\x^k} \left( {h_k \nabla f\left( {\x^k} \right)}
\right)=\arg \mathop {\min }\limits_{\x\in Q} \left\{ {h_k \left\langle
{\nabla f\left( {\x^k} \right),\x-\x^k} \right\rangle +\frac{1}{2}\left\|
{\x-\x^k} \right\|_2^2 } \right\},
\]
where
\[
h_k =\frac{2}{\mu \cdot \left( {k+1} \right)},
\quad
d\left( \x \right)=\frac{1}{2}\left\| {\x-\x^0} \right\|_2^2,
\quad
\left\| {\nabla f\left( \x \right)} \right\|_2 \le M,
\quad
\x\in Q.
\]
Then \cite{Lacost-Julien2012}
\[
f\left( {\sum\limits_{k=1}^N {\frac{2k}{k\left( {k+1} \right)}\x^k} }
\right)-f_\ast \le \frac{2M^2}{\mu \cdot \left( {k+1} \right)}.
\]
Hence (see also Table \ref{tabl}),
\[
N\simeq \frac{2M^2}{\mu \varepsilon }.
\]
This bound is also un-improvable up to a constant factor \cite{nemirovsky1983problem,nesterov2018lectures}.
%%=====================================
\section{Looking into the Black-Box}
\label{black-box}
%%=====================================
In this section we consider \pd{how problem special structure can} be used to solve non-smooth optimization problems with the convergence rate $O\left( \frac{1}{k}\right)$, which is faster than the lover bound $O\left( \frac{1}{\sqrt{k}}\right)$ for general class of non-smooth convex problems \cite{nemirovsky1983problem}. Nevertheless, there is no contradiction as additional structure is used and we are looking inside the black-box.
%%=====================================
\subsection{Nesterov's smoothing}
%%=====================================
In this subsection, following \cite{nesterov2005smooth}, we consider the problem
\begin{equation}
 \label{eq:Sm-ng-Problem}
 \min_{\x\in Q_1 \subset E_1} \{ f(\x) = h(\x) + \max_{ \bu \in Q_2 \subset E_2} \{ \iprod{A\x}{\bu} - \phi(\bu) \} \},
\end{equation}
where $A: E_1 \to E_2^*$ is a linear operator, $\phi(\bu)$ is a continuous convex function on $Q_2$, $Q_1, Q_2$ are convex compacts, $h$ is convex function with $L_h$-Lipschitz-continuous gradient.

Let us consider an example of $f(x) = \|A \x - \bb\|_{\infty}$ with $A \in \R^{m \times n}$. Then,
\[
f(x) = \max_{\bu \in \R^m} \left\{ \iprod{\bu}{A \x - \bb}: \|\bu\|_1 \leq 1 \right\},
\]
$h = 0$, $E_2 = \R^m$, $\phi(\bu) = \iprod{\bu}{\bb}$ and $Q_2$ is the ball in 1-norm.

The main idea of Nesterov is to add regularization inside the definition of $f$ in \eqref{eq:Sm-ng-Problem}. More precisely, a prox-function $d_2(\bu)$ (see definition in Section \ref{complexity}) is introduced for the set $Q_2$ and a smoothed counterpart $f_{\mu}(\x)$ for $f$ is defined as
\[
f_{\mu}(\x) = h(\x) + \max_{ \bu \in Q_2} \{ \iprod{A\x}{\bu} - \phi(\bu) - \mu d_2(\bu) \}
\]
and $\bu_{\mu}(\x)$ is the optimal solution of this maximization problem.
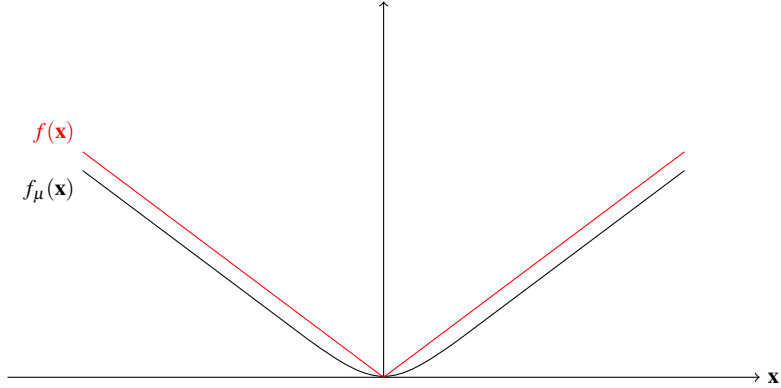
\begin{figure}
\centering
\begin{tikzpicture}[scale=1]
 \coordinate (y) at (0,5);
 \coordinate (x) at (5,0);
 \draw[<->] (y) node[above] {} -- (0,0) -- (x) node[right]
 {$\x$};
	\draw[-] (-5,0) -- (0,0);

 \path
	coordinate (start) at (-1,.5)
 coordinate (c1) at +(-.1,-0.15)
 coordinate (c2) at +(.1,-0.15)
 coordinate (slut) at (1,.5)
 coordinate (top) at (4.2,2);

 \draw (start).. controls (c1) and (c2).. (slut);
		
		\draw[-] (-4,2.75) node[below left] {$f_{\mu}(\x)$} -- (-1,0.5);
		\draw[-] (4,2.75) -- (1,0.5);
		
		\draw[red,-] (-4,3) node[above left] {{\color{red}$f(\x)$}} -- (0,0);
		\draw[red,-] (4,3) -- (0,0);
		
 \end{tikzpicture}
 \caption{Function $f_{\mu}(\x)$ is a smooth approximation to non-smooth function $f(\x)$.}
\end{figure}

\begin{theorem}[\cite{nesterov2005smooth}]
\label{Th:Sm-ng-Diff-f-mu}
The function $f_{\mu}(\x)$ is well defined, convex and continuously differentiable at any $\x \in E_1$ with $\nabla f_{\mu}(\x) = \nabla h(\x) + A^* \bu_{\mu}(\x)$. Moreover, $\nabla f_{\mu}(\x)$ is Lipschitz continuous with constant $L_{\mu} = L_h+\frac{\|A\|_{1,2}^2}{\mu}$.
\end{theorem}
Here the adjoint operator $A^*$ is defined by equality $\iprod{A\x}{\bu}=\iprod{A^*\bu}{\x}$, $\x\in E_1, \bu \in E_2$ and the norm of the operator $\|A\|_{1,2}$ is defined by $\|A\|_{1,2} = \max_{\x,\bu}\{\iprod{A\x}{\bu}:\|\x\|_{E_1} = 1, \|\bu\|_{E_2} = 1 \}$.

Since $Q_2$ is bounded, $f_{\mu}(\x)$ is a uniform approximation for the function $f$, namely, for all $\x \in Q_1$,
\begin{equation}
\label{eq:Sm-ng-Unif-Appr}
 f_{\mu}(\x) \leq f(\x) \leq f_{\mu}(\x) + \mu D_2,
\end{equation}
where $D_2 = \max \{d_2(\bu): \bu \in Q_2 \}$.

Then, the idea is to choose $\mu$ sufficiently small and apply accelerated gradient method to minimize $f_{\mu}(\x)$ on $Q_1$. We use accelerated gradient method from \cite{dvurechensky2017adaptive,dvurechensky2018computational} which is different from the original method of \cite{nesterov2005smooth}.

\begin{algorithm}[h!]
\caption{Accelerated Gradient Method}
\label{Alg:Sm-ng-AGM}
{
\begin{algorithmic}[1]
		\REQUIRE Objective $f(\x)$, feasible set $Q$, Lipschitz constant $L$ of the $\nabla f(\x)$, starting point $\x^0 \in Q$,, prox-setup: $d(\x)$ -- $1$-strongly convex w.r.t. $\|\cdot\|_{E_1}$, $V[\z] (\x):= d(\x) - d(\z) - \la \nabla d(\z), \x - \z \ra$.
		\STATE Set $k=0$, $C_0=\alpha_0=0$, $\y^0=\z^0=\x^0$.
			\FOR{$k=0,1,...$}
				\STATE Find $\alpha^{k+1}$ as the largest root of the equation
				\begin{equation}
				C_{k+1}:=C_k+\alpha_{k+1} = L\alpha_{k+1}^2.
				\label{eq:Sm-ng-alpQuadEq}
				\end{equation}
				\STATE
				\begin{equation}
				\x^{k+1} = \frac{\alpha_{k+1}\z^{k} + C_k \y^{k}}{C_{k+1}}.
				\label{eq:Sm-ng-lambdakp1Def}
				\end{equation}
				\STATE
				\begin{equation}
				\z^{k+1}=\arg \min_{\x \in Q} \{V[\z^{k}](\x) + \alpha_{k+1}(f(\x^{k+1}) + \langle \nabla f(\x^{k+1}), \x - \x^{k+1} \rangle) \}.
				\label{eq:Sm-ng-zetakp1Def}
				\end{equation}
				\STATE
				\begin{equation}
				\y^{k+1} = \frac{\alpha_{k+1}\z^{k+1} + C_k \y^{k}}{C_{k+1}}.
				\label{eq:Sm-ng-etakp1Def}
				\end{equation}
			\STATE Set $k=k+1$.
		\ENDFOR
		\ENSURE The point $\y^{k+1}$.	
\end{algorithmic}
}
\end{algorithm}

\begin{theorem}[\cite{dvurechensky2017adaptive,dvurechensky2018computational}]
\label{Th:Sm-ng-AGM-Conv}
Let the sequences $\{\x^{k}, \y^{k}, \z^{k}, \alpha_k, C_k \}$, $k\geq 0$ be generated by Algorithm \ref{Alg:Sm-ng-AGM}. Then, for all $k \geq 0$, it holds that
\begin{equation}
 	\label{eq:Sm-ng-AGM-Conv}
 	f(\y^{k}) - f^* \leq \frac{4LV[\z_0](\x^\star)}{(k+1)^2}.
\end{equation}
\end{theorem}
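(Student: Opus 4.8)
The plan is to establish \eqref{eq:Sm-ng-AGM-Conv} by a single induction on an estimate-sequence (potential) inequality that couples the value $f(\y^{k})$ with a Bregman-regularized affine lower model of $f$. Write $\ell_{i}(\x) := f(\x^{i}) + \iprod{\nabla f(\x^{i})}{\x - \x^{i}}$ for the linearization of $f$ at $\x^{i}$; each $\ell_i$ is affine and, by convexity of $f$, satisfies $\ell_i(\x)\le f(\x)$ for all $\x\in Q$. I would prove that for every $k\ge 0$ and every $\x\in Q$,
\[
C_k f(\y^{k}) + V[\z^{k}](\x) \le \sum_{i=1}^{k}\alpha_i\,\ell_i(\x) + V[\z^{0}](\x).
\]
The base case $k=0$ is an identity, since $C_0=0$ and $\y^0=\z^0$. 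For the inductive step I add $\alpha_{k+1}\ell_{k+1}(\x)$ to both sides of the hypothesis at level $k$, so the right-hand side becomes the aggregated model at level $k+1$; it then suffices to show that the left-hand side dominates $C_{k+1}f(\y^{k+1}) + V[\z^{k+1}](\x)$.

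The first tool for this is the prox-lemma for \eqref{eq:Sm-ng-zetakp1Def}: since $\z^{k+1}$ minimizes $\alpha_{k+1}\ell_{k+1}(\cdot)+V[\z^{k}](\cdot)$ over $Q$, combining the optimality condition with the three-point identity $V[\z^{k}](\x)=V[\z^{k}](\z^{k+1})+V[\z^{k+1}](\x)+\iprod{\nabla d(\z^{k+1})-\nabla d(\z^{k})}{\x-\z^{k+1}}$ gives, for all $\x\in Q$,
\[
\alpha_{k+1}\ell_{k+1}(\x)+V[\z^{k}](\x)\ \ge\ \alpha_{k+1}\ell_{k+1}(\z^{k+1})+V[\z^{k}](\z^{k+1})+V[\z^{k+1}](\x).
\]
After this the term $V[\z^{k+1}](\x)$ cancels on both sides and $\x$ disappears, so the claim reduces to the $\x$-free inequality $C_k f(\y^{k})+\alpha_{k+1}\ell_{k+1}(\z^{k+1})+V[\z^{k}](\z^{k+1})\ge C_{k+1}f(\y^{k+1})$. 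I would bound $C_k f(\y^{k})\ge C_k\ell_{k+1}(\y^{k})$ by convexity, then use affineness of $\ell_{k+1}$ together with \eqref{eq:Sm-ng-etakp1Def}, which writes $\y^{k+1}=(C_k\y^{k}+\alpha_{k+1}\z^{k+1})/C_{k+1}$ as a convex combination, to collapse $C_k\ell_{k+1}(\y^{k})+\alpha_{k+1}\ell_{k+1}(\z^{k+1})=C_{k+1}\ell_{k+1}(\y^{k+1})$. Replacing $V[\z^{k}](\z^{k+1})$ by $\tfrac12\norm{\z^{k+1}-\z^{k}}^2$ via $1$-strong convexity of $d$, and $\ell_{k+1}(\y^{k+1})$ by $f(\y^{k+1})-\tfrac{L}{2}\norm{\y^{k+1}-\x^{k+1}}^2$ via the descent lemma for the $L$-smooth $f$, the claim becomes $\tfrac12\norm{\z^{k+1}-\z^{k}}^2-\tfrac{L}{2}C_{k+1}\norm{\y^{k+1}-\x^{k+1}}^2\ge 0$.

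I expect this last inequality to be the only delicate point, and it is precisely where the step-size rule is used. From \eqref{eq:Sm-ng-lambdakp1Def} and \eqref{eq:Sm-ng-etakp1Def} one obtains the exact identity $\y^{k+1}-\x^{k+1}=\tfrac{\alpha_{k+1}}{C_{k+1}}(\z^{k+1}-\z^{k})$, so the two quadratics are proportional with ratio $L\alpha_{k+1}^2/C_{k+1}$; the defining equation $C_{k+1}=L\alpha_{k+1}^2$ from \eqref{eq:Sm-ng-alpQuadEq} makes this ratio exactly $1$, the terms cancel identically, and the induction closes. To finish, I specialize the invariant to $\x=\xs$, drop the nonnegative term $V[\z^{k}](\xs)$, and use $\ell_i(\xs)\le f(\xs)=f^*$ with $\sum_{i=1}^{k}\alpha_i=C_k$ to get $C_k\bigl(f(\y^{k})-f^*\bigr)\le V[\z^{0}](\xs)$. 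The remaining task is a lower bound on $C_k$: from $C_{k+1}=C_k+\alpha_{k+1}$ and $\alpha_{k+1}=\sqrt{C_{k+1}/L}$ one derives $\sqrt{C_{k+1}}-\sqrt{C_k}\ge \tfrac{1}{2\sqrt L}$, and, accounting separately for the first step which yields the full $\sqrt{C_1}=1/\sqrt L$, telescoping gives $\sqrt{C_k}\ge (k+1)/(2\sqrt L)$, i.e. $C_k\ge (k+1)^2/(4L)$. Substituting this into the bound produces exactly \eqref{eq:Sm-ng-AGM-Conv}; the sharpened first step is what gives $(k+1)^2$ rather than $k^2$ in the denominator.
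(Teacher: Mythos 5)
The paper states this theorem with a citation only and gives no proof of its own, so there is nothing internal to compare against; your argument is the standard potential-function (estimate-sequence) proof used in the cited references, and it is correct: the invariant, the prox three-point inequality, the collapse of the linearizations via the convex-combination identity $\y^{k+1}-\x^{k+1}=\tfrac{\alpha_{k+1}}{C_{k+1}}(\z^{k+1}-\z^{k})$, the exact cancellation forced by $C_{k+1}=L\alpha_{k+1}^2$, and the growth bound $C_k\ge (k+1)^2/(4L)$ all check out. The only caveat is the vacuous case $k=0$, where $C_0=0$ and the final division is not available, so the stated bound really only follows for $k\ge 1$ --- a defect of the theorem's ``for all $k\ge 0$'' phrasing rather than of your proof.
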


Following the same steps as in the proof of Theorem 3 in \cite{nesterov2005smooth}, we obtain
\begin{theorem}
\label{Th:Sm-ng-Main-Th}
Let Algorithm \ref{Alg:Sm-ng-AGM} be applied to minimize $f_{\mu}(\x)$ on $Q_1$ with $\mu = \frac{2\|A\|_{1,2}}{N+1} \sqrt{\frac{D_1}{D_2}}$, where $D_1 = \max \{d_1(\x): \x \in Q_1 \}$. Then, after $N$ iterations, we have
\begin{equation}
\label{eq:Sm-ng-Main-Conve-Rate}
 0 \leq f(\y^N) - f_\star \leq \frac{4\|A\|_{1,2} \sqrt{D_1D_2}}{N+1} + \frac{4L_hD_1}{(N+1)^2}.
\end{equation}
\end{theorem}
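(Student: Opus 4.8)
The plan is to run Algorithm \ref{Alg:Sm-ng-AGM} on the smooth surrogate $f_\mu$, translate the resulting guarantee back to the original $f$ via the uniform approximation bound \eqref{eq:Sm-ng-Unif-Appr}, and finally tune $\mu$. First I would invoke Theorem \ref{Th:Sm-ng-Diff-f-mu}: it certifies that $f_\mu$ is convex with an $L_\mu$-Lipschitz gradient, $L_\mu = L_h + \|A\|_{1,2}^2/\mu$, so Theorem \ref{Th:Sm-ng-AGM-Conv} applies with $L = L_\mu$. Writing $f_\mu^\star = \min_{\x \in Q_1} f_\mu(\x)$ and letting $\x_\mu^\star$ be a minimizer, this gives
\[
f_\mu(\y^N) - f_\mu^\star \le \frac{4 L_\mu V[\z^0](\x_\mu^\star)}{(N+1)^2}.
\]
Taking the prox-center $\x^0 = \z^0$ to be the minimizer of $d_1$, so that $V[\z^0](\x) = d_1(\x) - d_1(\x^0) \le d_1(\x) \le D_1$, bounds $V[\z^0](\x_\mu^\star) \le D_1$ and yields $f_\mu(\y^N) - f_\mu^\star \le 4 L_\mu D_1/(N+1)^2$.

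Next I would pass from $f_\mu$ back to $f$ using \eqref{eq:Sm-ng-Unif-Appr}. Its right inequality gives $f(\y^N) \le f_\mu(\y^N) + \mu D_2$, while the left inequality $f_\mu \le f$ yields $f_\mu^\star \le f_\mu(\x_f^\star) \le f(\x_f^\star) = f_\star$ (with $\x_f^\star$ a minimizer of $f$). Chaining these,
\[
f(\y^N) - f_\star \le \bigl(f_\mu(\y^N) - f_\mu^\star\bigr) + \mu D_2 \le \frac{4 L_\mu D_1}{(N+1)^2} + \mu D_2.
\]
Substituting $L_\mu = L_h + \|A\|_{1,2}^2/\mu$ splits the right-hand side into the $\mu$-independent term $4 L_h D_1/(N+1)^2$ plus the $\mu$-dependent part $4\|A\|_{1,2}^2 D_1/\bigl(\mu (N+1)^2\bigr) + \mu D_2$.

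Finally I would minimize the $\mu$-dependent part, which has the form $C/\mu + \mu D_2$ with $C = 4\|A\|_{1,2}^2 D_1/(N+1)^2$. Its minimizer is $\mu = \sqrt{C/D_2} = \tfrac{2\|A\|_{1,2}}{N+1}\sqrt{D_1/D_2}$, exactly the prescribed value; at this $\mu$ both terms equal $2\|A\|_{1,2}\sqrt{D_1 D_2}/(N+1)$, summing to $4\|A\|_{1,2}\sqrt{D_1 D_2}/(N+1)$, which together with the fixed term reproduces \eqref{eq:Sm-ng-Main-Conve-Rate}. The lower bound $0 \le f(\y^N) - f_\star$ is immediate, since $f_\star$ is the minimum of $f$ over $Q_1$.

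The argument is essentially bookkeeping built on the three cited results, so there is no deep obstacle; the only genuinely non-routine step is the estimate $V[\z^0](\x_\mu^\star) \le D_1$. It relies on the standard convention that the prox-center coincides with the minimizer of $d_1$, so that the Bregman divergence from $\z^0$ is dominated by $d_1$ itself; without this normalization one would retain $V[\z^0](\x_\mu^\star)$ in place of $D_1$ and obtain a slightly weaker constant. I would also take care that the output $\y^N$ corresponds to index $k = N$ in Theorem \ref{Th:Sm-ng-AGM-Conv}, which is what produces the $(N+1)^2$ denominator and the clean balancing of the two error terms.
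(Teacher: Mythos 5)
Your proposal is correct and follows essentially the same route as the paper: apply Theorem \ref{Th:Sm-ng-AGM-Conv} to $f_\mu$ with $L_\mu = L_h + \|A\|_{1,2}^2/\mu$, pass back to $f$ via \eqref{eq:Sm-ng-Unif-Appr}, and balance the two $\mu$-dependent terms with the prescribed choice of $\mu$. The only difference is that you make explicit the steps the paper leaves implicit (the bound $V[\z^0](\x_\mu^\star)\le D_1$ via the choice $\z^0=\arg\min_{\x\in Q_1} d_1(\x)$, and the verification that the stated $\mu$ is the minimizer of $C/\mu+\mu D_2$), which is a faithful filling-in rather than a different argument.
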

\begin{proof}
Applying Theorem \ref{Th:Sm-ng-AGM-Conv} to $f_{\mu}$, and using \eqref{eq:Sm-ng-Unif-Appr}, we obtain

\begin{align*}
 0 \leq f(\y^N) - f_\star & \leq f_{\mu}(\y^N) + \mu D_2 - f_{\mu}(\x_{\mu}^\star) \leq \mu D_2 + \frac{4L_{\mu}D_1}{(N+1)^2} + \frac{4L_hD_1}{(N+1)^2}
\\
& = \mu D_2 + \frac{4\|A\|_{1,2}^2D_1}{\mu(N+1)^2} + \frac{4L_hD_1}{(N+1)^2}.
\end{align*}

Substituting the value of $\mu$ from the theorem statement, we finish the proof.
\EndProof
\end{proof}

A generalization of the smoothing technique for the case of non-compact sets $Q_1, Q_2$, which is especially interesting when dealing with problems dual to problems with linear constraints, can be found in \cite{tran-dinh2015smooth}. Ubiquitous entropic regularization of optimal transport \cite{cuturi2013sinkhorn} can be seen as a particular case of the application of smoothing technique, especially in the context of Wasserstein barycenters \cite{cuturi2014fast,uribe2018distributed,dvurechensky2018decentralize}.
%%=====================================
\subsection{Nemirovski's Mirror Prox}
\label{nemir-prox}
%%=====================================
In his paper \cite{nemirovski2004prox}, Nemirovski considers problem \eqref{eq:Sm-ng-Problem} in the following form
\begin{equation}
 \label{eq:MP-Problem}
 \min_{\x\in Q_1 \subset E_1} \{ f(\x) = h(\x) + \max_{ \bu \in Q_2 \subset E_2} \{ \iprod{A\x}{\bu} + \iprod{\bb}{\bu} \} \},
\end{equation}
pointing to the fact that this problem is as general as \eqref{eq:Sm-ng-Problem}. Indeed, the change of variables $\bu \leftarrow (\bu,t)$ and the feasible set $Q_2 \leftarrow \{(\bu,t): \min_{\bu' \in Q_2} \phi(\bu') \leq t \leq \phi(\bu)\} $ allows to make $\phi$ linear. His idea is to consider problem \eqref{eq:MP-Problem} directly as a convex-concave saddle point problem and associated weak variational inequality (VI).
\begin{equation}
 \label{eq:MP-VI}
 \text{Find} \;\;\; \z^\star = (\x^\star, \bu^\star) \in Q_1 \times Q_2 \;\;\; \text{s.t.} \;\;\; \la \Phi(\z), \z^\star - \z \ra \leq 0 \;\; \forall \z \in Q_1 \times Q_2,
\end{equation}
where the operator
\begin{equation}
 \label{eq:MP-Oper}
\Phi(\z) = \left(\begin{array}{c} \nabla h(\x) + A^* \bu \\ -A \x - \bb \end{array} \right)
\end{equation}
is monotone, i.e. $\iprod{\Phi(\z_1)- \Phi(\z_2)}{\z_1 - \z_2} \geq 0$, and Lipschitz-continuous, i.e. $\|\Phi(\z_1)- \Phi(\z_2)\|_* \leq L \|\z_1 - \z_2\|$. With the appropriate choice of norm on $E_1 \times E_2$ and prox-function for $Q_1 \times Q_2$, see Section 5 in \cite{nemirovski2004prox}, the Lipschitz constant for $\Phi$ can be estimated as $L= 2 \|A\|_{1,2}\sqrt{D_1D_2}+ L_hD_1$.

\begin{algorithm}[ht]
\caption{Mirror Prox}
\label{Alg:Sm-ng-UMP}
\begin{algorithmic}[1]
 \REQUIRE General VI on a set $Q \subset E$ with operator $\Phi(\z)$, Lipschitz constant $L$ of $\Phi(\z)$, prox-setup: $d(\z)$, $V[\z] (\bw)$.
 \STATE Set $k=0$, $\z^0 = \arg \min_{\z \in Q} d(\z)$.
 \FOR{$k=0,1,...$}
		\STATE Calculate
				\begin{equation}
				\bw^k={\mathop {\arg \min }\limits_{\z\in Q}}\left\{\la \Phi(\z^k),\z \ra + LV[\z^k](\z) 		 \right\}.
				\label{eq:MP-MPwStep}
				\end{equation}
				\STATE Calculate
				\begin{equation}
				\z^{k+1}={\mathop {\arg \min }\limits_{\z\in Q}} \left\{\la \Phi(\bw^k),\z \ra + LV[\z^k](\z) 		 \right\}.
				\label{eq:MP-MPzStep}
				\end{equation}
			\STATE Set $k=k+1$.
	\ENDFOR
		\ENSURE $\widehat{\bw}^k = \frac{1}{k}\sum_{i=0}^{k-1}\bw^i$.
\end{algorithmic}
\end{algorithm}

\begin{theorem}[\cite{nemirovski2004prox}]
\label{Th:MP-MPGenRate}
Assume that $\Phi(\z)$ is monotone and $L$-Lipschitz-continuous. Then, for any $k \geq 1$ and any $\bu \in Q$,
\begin{equation}
\label{eq:MP-MPGenRate}
\max_{\z \in Q} \la \Phi(\z), \widehat{\bw}^k - \z \ra \leq \frac{L}{k} \max_{\z \in Q}V[\z^0](\z).
\end{equation}
Moreover, if the VI is associated with a convex-concave saddle point problem, i.e.
\begin{itemize}
 \item $E = E_1 \times E_2$,
 \item $Q = Q_1 \times Q_2$ with convex compact sets $Q_1 \subset E_1$, $Q_2 \subset E_2$
 \item $\Phi(\z) = \Phi(\x,\bu) = \left(\begin{array}{c} \nabla_{\x} f(\x, \bu) \\ -\nabla_{\bu} f(\x, \bu) \end{array} \right)$ for a continuously differentiable
function $f(\x, \bu)$ which is convex in $\x \in Q_1$ and concave in $\bu \in Q_2$,
\end{itemize}
then
\begin{equation}
\label{eq:MP-MPSPRate}
[\max_{\bu \in Q_2} f(\widehat{\x}^k, \bu) - \min_{\x \in Q_1}\max_{\bu \in Q_2} f(\x, \bu)] + [\min_{\x \in Q_1}\max_{\bu \in Q_2} f(\x, \bu) - \min_{\x \in Q_1} f(\x, \widehat{\bu}^k) ] \leq \frac{L}{k} \max_{\z \in Q}V[\z^0](\z).
\end{equation}

\end{theorem}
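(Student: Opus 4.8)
The plan is to reproduce Nemirovski's two-step (extragradient) analysis, whose engine is the first-order optimality condition of each prox-mapping combined with the three-point identity for the Bregman divergence. First I would observe that $\bw^k$ in \eqref{eq:MP-MPwStep} and $\z^{k+1}$ in \eqref{eq:MP-MPzStep} minimize strongly convex objectives over $Q$, so their optimality conditions read, for every $\z \in Q$,
$$\la \Phi(\z^k) + L(\nabla d(\bw^k) - \nabla d(\z^k)), \z - \bw^k \ra \geq 0, \qquad \la \Phi(\bw^k) + L(\nabla d(\z^{k+1}) - \nabla d(\z^k)), \z - \z^{k+1} \ra \geq 0.$$
Using the identity $\la \nabla d(\bw^k) - \nabla d(\z^k), \z - \bw^k \ra = V[\z^k](\z) - V[\z^k](\bw^k) - V[\bw^k](\z)$ (and its analogue with $\bw^k$ replaced by $\z^{k+1}$), these turn into
$$\la \Phi(\z^k), \bw^k - \z \ra \leq L\big(V[\z^k](\z) - V[\z^k](\bw^k) - V[\bw^k](\z)\big)$$
together with the corresponding bound for $\la \Phi(\bw^k), \z^{k+1} - \z \ra$.

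The central step is to fuse these two inequalities into one recursion. I would write the exact decomposition
$$\la \Phi(\bw^k), \bw^k - \z \ra = \la \Phi(\bw^k) - \Phi(\z^k), \bw^k - \z^{k+1} \ra + \la \Phi(\z^k), \bw^k - \z^{k+1} \ra + \la \Phi(\bw^k), \z^{k+1} - \z \ra,$$
bound the last two inner products by the optimality inequalities above (the middle one specialized to $\z = \z^{k+1}$), and control the first by Lipschitz continuity, $\norm{\Phi(\bw^k) - \Phi(\z^k)}_* \, \norm{\bw^k - \z^{k+1}} \leq L \norm{\bw^k - \z^k}\,\norm{\bw^k - \z^{k+1}}$, followed by Young's inequality and $1$-strong convexity of $d$ (so $\tfrac12\norm{\cdot}^2 \leq V[\cdot](\cdot)$). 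The negative terms $-LV[\z^k](\bw^k)$ and $-LV[\bw^k](\z^{k+1})$ generated by the optimality conditions are exactly what absorbs the cross term, and everything collapses to the clean bound
$$\la \Phi(\bw^k), \bw^k - \z \ra \leq L\big(V[\z^k](\z) - V[\z^{k+1}](\z)\big).$$

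From here \eqref{eq:MP-MPGenRate} is routine: summing over $i=0,\dots,k-1$ telescopes the right-hand side to $L(V[\z^0](\z) - V[\z^k](\z)) \leq L V[\z^0](\z)$, monotonicity of $\Phi$ gives $\la \Phi(\z), \bw^i - \z \ra \leq \la \Phi(\bw^i), \bw^i - \z \ra$, and dividing by $k$, using the definition of $\widehat{\bw}^k$, and maximizing over $\z \in Q$ yields the claim. For the saddle-point refinement I would write each $\bw^i$ in block components $(\x^i, \bu^i)$ and use convexity of $f(\cdot,\bu)$ and concavity of $f(\x,\cdot)$ to obtain the pointwise lower bound $\la \Phi(\bw^i), \bw^i - \z \ra \geq f(\x^i,\bu) - f(\x,\bu^i)$ for $\z=(\x,\bu)$; averaging and applying Jensen's inequality in each variable converts the left side of \eqref{eq:MP-MPGenRate} into $f(\widehat{\x}^k,\bu) - f(\x,\widehat{\bu}^k)$. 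Maximizing over $(\x,\bu)$ and invoking the minimax equality $\min_{\x}\max_{\bu} f = \max_{\bu}\min_{\x} f$ (valid on the convex compacts) identifies this quantity with the two nonnegative bracketed gaps in \eqref{eq:MP-MPSPRate}.

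The main obstacle is the second paragraph: obtaining the telescoping recursion with the full step $1/L$ hinges on the delicate cancellation by which the two negative Bregman terms from the prox optimality conditions exactly neutralize the Lipschitz cross term after Young's inequality. A single-step (plain Mirror Descent) argument would not produce this cross term to absorb, which is precisely why the extragradient structure of \eqref{eq:MP-MPwStep}--\eqref{eq:MP-MPzStep} is essential.
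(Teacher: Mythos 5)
Your proposal is correct and follows essentially the argument of the cited source \cite{nemirovski2004prox} (the paper itself states this theorem without proof, deferring to that reference): the prox-mapping optimality conditions plus the three-point identity, the three-term decomposition of $\la \Phi(\bw^k), \bw^k - \z \ra$ with the Lipschitz cross term absorbed via Young's inequality and $1$-strong convexity of $d$, telescoping, monotonicity, and convexity--concavity with Jensen for the saddle-point bound. One minor remark: the minimax equality is not actually needed for \eqref{eq:MP-MPSPRate}, since the two bracketed terms telescope and their sum equals $\max_{\bu \in Q_2} f(\widehat{\x}^k, \bu) - \min_{\x \in Q_1} f(\x, \widehat{\bu}^k)$ identically; it is only required if one wants each bracket to be nonnegative separately.
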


Choosing appropriately the norm in the space $E_1 \times E_2$ and applying Mirror Prox algorithm to solve problem \eqref{eq:MP-Problem} as a saddle point problem, we obtain that the saddle point error in the l.h.s. of \eqref{eq:MP-MPSPRate} decays as $\frac{2 \|A\|_{1,2}\sqrt{D_1D_2}+L_hD_1}{k}$. This is slightly worse than the rate in \eqref{eq:Sm-ng-AGM-Conv} since the accelerated gradient method allows the faster decay for the smooth part $h(\x)$. An accelerated Mirror Prox method with the same rate as in \eqref{eq:Sm-ng-AGM-Conv} can be found in \cite{chen2017accelerated}.

%%=====================================
 \section{Non-Smooth Optimization in Large Dimensions}
\label{large-dim}
%%=====================================
The optimization of non-smooth functionals with constraints attracts widespread interest
in large-scale optimization and its applications \cite{ben-tal1997robust,nesterov2014primal-dual}.
Subgradient methods for non-smooth optimization have a long history
starting with the method for deterministic unconstrained problems and
Euclidean setting in \cite{shor1967generalized} and
the generalization for constrained problems in \cite{polyak1967general},
where the idea of steps switching between the direction of subgradient of the objective and
the direction of subgradient of the constraint was suggested.
Non-Euclidean extension, usually referred to as Mirror Descent,
originated in \cite{nemirovskii1979efficient,nemirovsky1983problem} and
was later analyzed in \cite{beck2003mirror}.
An extension for constrained problems was proposed in \cite{nemirovsky1983problem},
see also recent version in \cite{beck2010comirror}.
To prove faster convergence rate of Mirror Descent for
strongly convex objective in an unconstrained case,
the restart technique \cite{nemirovskii1985optimal,nemirovsky1983problem,nesterov1983method} was used in \cite{juditsky2012first-order}.
Usually, the step-size and stopping rule for Mirror Descent requires
to know the Lipschitz constant of the objective function and constraint, if any.
Adaptive step-sizes, which do not require this information,
are considered in \cite{nemirovskii1979efficient} for problems without inequality constraints,
and in \cite{beck2010comirror} for constrained problems.

Formally speaking, we consider the following convex constrained minimization problem
\begin{align}
\label{eq:PrSt}
 \min \{ f(\x): \quad \x \in X \subset E, \quad g(\x) \leq 0\},
\end{align}
where $X$ is a convex closed subset of a finite-dimensional real vector space $E$, $f: X \to \R$, $g: E \to \R$ are convex functions.

We assume $g$ to be a non-smooth Lipschitz-continuous function and the problem \eqref{Alg:MDNS} to be regular. The last means that there exists a point $\bar{\x}$ in relative interior of the set $X$, such that $g(\bar{\x}) < 0$.

Note that, despite problem \eqref{eq:PrSt} contains only one inequality constraint, considered algorithms allow to solve more general problems with a number of constraints given as $\{ g_i(\x) \leq 0, i=1,...,m \}$. The reason is that these constraints can be aggregated and represented as an equivalent constraint given by $\{ g(\x) \leq 0\}$, where $g(\x)=\max_{i=1,...,m}g_i(\x)$.

We consider two adaptive Mirror Descent methods \cite{bayandina2018mirror} for the problem \eqref{eq:PrSt}. Both considered methods have complexity $O\left(\frac{1}{\varepsilon^2}\right)$ and optimal.

We consider algorithms, which are based on Mirror Descent method. Thus, we start with the description of proximal setup and basic properties of Mirror Descent step.
Let $E$ be a finite-dimensional real vector space and $E^*$ be its dual. We denote the value of a linear function $g \in E^*$ at $\x\in E$ by $\langle \bg, \x \rangle$. Let $\|\cdot\|_{E}$ be some norm on $E$, $\|\cdot\|_{E,*}$ be its dual, defined by $\|\bg\|_{E,*} = \max\limits_{\x} \big\{ \langle \bg, \x \rangle, \| \x \|_E \leq 1 \big\}$. We use $\nabla f(\x)$ to denote any subgradient of a function $f$ at a point $\x \in {\rm dom} f$.

Given a vector $\x\in X^0$, and a vector $\bp \in E^*$, the Mirror Descent step is defined as

\begin{equation}
\x^+ = \mathrm{Mirr}[\x](\bp):= \arg\min\limits_{\z\in X} \big\{ \la \bp, \z \ra + V[\x](\z) \big\} = \arg\min\limits_{\z\in X} \big\{ \la \bp, \z \ra + d(\z) - \la \nabla d(\x), \z \ra \big\}.
\label{eq:MDStep}
\end{equation}
We make the simplicity assumption, which means that $\mathrm{Mirr}[\x] (\bp)$ is easily computable.

The following lemma \cite{ben-tal2015lectures} describes the main property of the Mirror Descent step.
\begin{lemma}
	\label{Lm:MDProp}
 Let $f$ be some convex function over a set $X$, $h > 0$ be a step-size, $\x \in X^{0}$. Let the point $\x^+$ be defined by
 $ \x^+ = \mathrm{Mirr}[\x](h (\nabla f(\x)))$. Then, for any $\z \in X$,
 \begin{align}
 h \big( f(\x) - f(\z)\big) \leq h \la \nabla f(\x), \x - \z \ra \notag \\
		& \leq \frac{h^2}{2} \| \nabla f (\x)\pd{\|^2} + V[\x](\z) - V[\x^+](\z).	 \label{eq:MDProp}
 \end{align}
\end{lemma}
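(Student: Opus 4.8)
The plan is to split the claimed chain into its two inequalities and treat them separately. The left inequality $h\big(f(\x)-f(\z)\big) \le h\la \nabla f(\x), \x-\z\ra$ is immediate from convexity of $f$: the subgradient inequality $f(\z) \ge f(\x) + \la \nabla f(\x), \z-\x\ra$ rearranges to $f(\x)-f(\z) \le \la \nabla f(\x), \x - \z\ra$, and multiplying by $h>0$ gives the claim. All the real content sits in the right inequality, so that is where I would concentrate.

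For the right inequality I would abbreviate $\bp = h\nabla f(\x)$, so that $\x^+ = \mathrm{Mirr}[\x](\bp) = \arg\min_{\z\in X}\{\la \bp, \z\ra + V[\x](\z)\}$ by \eqref{eq:MDStep}. First I would record the first-order optimality condition for this constrained minimization in variational-inequality form. Since $\nabla_\z V[\x](\z) = \nabla d(\z) - \nabla d(\x)$, minimality of $\x^+$ over the convex set $X$ yields $\la \bp + \nabla d(\x^+) - \nabla d(\x),\, \z - \x^+\ra \ge 0$ for every $\z \in X$, which I rewrite as $\la \bp, \x^+ - \z\ra \le \la \nabla d(\x^+)-\nabla d(\x),\, \z - \x^+\ra$.

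The pivotal algebraic step is the three-point identity for the Bregman divergence. Expanding the definition $V[\x](\cdot) = d(\cdot) - d(\x) - \la \nabla d(\x),\, \cdot - \x\ra$ directly, every $d$-value cancels and one gets
\[
V[\x](\z) - V[\x^+](\z) - V[\x](\x^+) = \la \nabla d(\x^+) - \nabla d(\x),\, \z - \x^+\ra.
\]
Combining this identity with the optimality inequality gives $\la \bp, \x^+ - \z\ra \le V[\x](\z) - V[\x^+](\z) - V[\x](\x^+)$. I then split $\la \bp, \x - \z\ra = \la \bp, \x - \x^+\ra + \la \bp, \x^+ - \z\ra$, substitute, and am left to control the residual $\la \bp, \x - \x^+\ra - V[\x](\x^+)$.

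The final step exploits the prox-setup. Strong convexity of $d$ gives $V[\x](\x^+) \ge \tfrac12 \norm{\x^+ - \x}^2$, while Fenchel--Young (equivalently Cauchy--Schwarz for dual norms together with $ab \le \tfrac12 a^2 + \tfrac12 b^2$) gives $\la \bp, \x - \x^+\ra \le \tfrac12\norm{\bp}_*^2 + \tfrac12\norm{\x - \x^+}^2$. Adding the two bounds cancels the $\tfrac12\norm{\x-\x^+}^2$ terms, so $\la \bp, \x - \x^+\ra - V[\x](\x^+) \le \tfrac12\norm{\bp}_*^2$. Restoring $\bp = h\nabla f(\x)$ turns this into $\tfrac{h^2}{2}\norm{\nabla f(\x)}_*^2$, which is precisely the asserted bound, and chaining through the convexity inequality closes the argument. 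The one place demanding care is the optimality condition: because $X$ is a genuine constraint set rather than all of $E$, one must use the variational-inequality form rather than setting a gradient to zero, and one must invoke differentiability of $d$ so that $\nabla d(\x^+)$ is meaningful. This is the main (and only mild) obstacle; everything else is mechanical bookkeeping.
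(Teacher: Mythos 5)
Your proof is correct. The paper itself does not prove this lemma --- it is stated with a citation to the literature --- and your argument is exactly the standard one used there: the subgradient inequality for the left-hand bound, then the variational-inequality form of optimality for $\x^+$, the three-point identity for the Bregman divergence, Fenchel--Young, and $1$-strong convexity of $d$ to absorb $\tfrac12\norm{\x-\x^+}^2$. One small point in your favor: you correctly carry the dual norm $\norm{\nabla f(\x)}_{E,*}^2$ through the Fenchel--Young step, which is the norm that should appear in \eqref{eq:MDProp} (the paper's display omits the subscript).
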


The following analog of Lemma \ref{Lm:MDProp} for $\delta$-subgradient $\nabla_{\delta} f$ holds.

\begin{lemma}
	\label{Lm:MDPropDelta}
 Let $f$ be some convex function over a set $X$, $h > 0$ be a step-size, $\x \in X^{0}$. Let the point $\x^+$ be defined by
 $ \x^+ = \mathrm{Mirr}[\x](h \cdot (\nabla_{\delta} f(\x)))$. Then, for any $\z \in X$,
 \begin{align}
 h \cdot \big( f(\x) - f(\z)\big) \leq h \cdot \la \nabla f(\x), \x - \z \ra + h \cdot \delta \notag \\
		& \leq \frac{h^2}{2} \| \nabla_{\delta} f (\x) \| + h \cdot \delta + V[\x](\z) - V[\x^+](\z).	 \label{eq:MDPropDelta}
 \end{align}
\end{lemma}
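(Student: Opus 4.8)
The plan is to reproduce the proof of Lemma~\ref{Lm:MDProp} almost verbatim, isolating the single place where exactness of the subgradient is used. Recall that $\nabla_{\delta} f(\x)$ is a $\delta$-subgradient of $f$ at $\x$, meaning $f(\z) \ge f(\x) + \la \nabla_{\delta} f(\x), \z - \x \ra - \delta$ for every $\z \in X$; equivalently $f(\x) - f(\z) \le \la \nabla_{\delta} f(\x), \x - \z \ra + \delta$. Multiplying by $h > 0$ yields the first inequality in \eqref{eq:MDPropDelta} directly. This is the only step that differs from the exact case: convexity is replaced by the defining inequality of a $\delta$-subgradient, and the sole effect is the extra additive term $h \cdot \delta$ (so the $\nabla f$ appearing in the middle expression of the statement should be read as $\nabla_{\delta} f$).

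For the second inequality I would work with the vector $\bp = h \cdot \nabla_{\delta} f(\x)$ fed into the mirror step $\x^+ = \mathrm{Mirr}[\x](\bp)$. The key observation is that the analysis of the mirror step is insensitive to whether $\bp$ is an exact subgradient, an approximate one, or any vector at all: it only exploits the first-order optimality condition of the minimization problem defining $\mathrm{Mirr}[\x](\bp)$. Thus I would first write the optimality condition $\la \bp + \nabla d(\x^+) - \nabla d(\x), \z - \x^+ \ra \ge 0$ for all $\z \in X$, then invoke the three-point identity $V[\x](\z) - V[\x^+](\z) - V[\x](\x^+) = \la \nabla d(\x^+) - \nabla d(\x), \z - \x^+ \ra$, obtaining $h \cdot \la \nabla_{\delta} f(\x), \x^+ - \z \ra \le V[\x](\z) - V[\x^+](\z) - V[\x](\x^+)$.

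The remaining routine step is to split $\la \nabla_{\delta} f(\x), \x - \z \ra = \la \nabla_{\delta} f(\x), \x - \x^+ \ra + \la \nabla_{\delta} f(\x), \x^+ - \z \ra$ and bound the first summand by the Fenchel--Young inequality $h \cdot \la \nabla_{\delta} f(\x), \x - \x^+ \ra \le \frac{h^2}{2} \| \nabla_{\delta} f(\x) \|_{E,*}^2 + \frac{1}{2}\|\x - \x^+\|_E^2$, then absorb $\frac{1}{2}\|\x - \x^+\|_E^2$ into $-V[\x](\x^+)$ via the $1$-strong convexity of $d$, namely $V[\x](\x^+) \ge \frac{1}{2}\|\x - \x^+\|_E^2$. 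This gives $h \cdot \la \nabla_{\delta} f(\x), \x - \z \ra \le \frac{h^2}{2}\|\nabla_{\delta} f(\x)\|_{E,*}^2 + V[\x](\z) - V[\x^+](\z)$, and adding $h \cdot \delta$ to both sides produces the second inequality of \eqref{eq:MDPropDelta}.

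I do not expect any genuine obstacle here; the statement is the $\delta$-perturbed twin of Lemma~\ref{Lm:MDProp}, and setting $\delta = 0$ recovers that lemma exactly. The only point requiring a little care is purely bookkeeping: keeping the $\delta$-subgradient defining inequality separate from the mirror-step estimate, so that the additive $h \cdot \delta$ enters exactly once (in place of convexity) and is then simply carried along through the otherwise identical chain of inequalities.
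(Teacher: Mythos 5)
Your proof is correct and is exactly the argument the paper intends: the paper states this lemma without proof as the ``analog of Lemma~\ref{Lm:MDProp} for $\delta$-subgradients,'' and your derivation (replacing convexity by the $\delta$-subgradient inequality to pick up the single $h\cdot\delta$ term, then running the unchanged mirror-step analysis via the optimality condition, the three-point identity, Fenchel--Young, and $1$-strong convexity of $d$) is the standard way to carry that out. You also correctly flag the two typos in the statement: the middle term should read $\la \nabla_{\delta} f(\x), \x-\z\ra$ and the right-hand side should contain $\frac{h^2}{2}\|\nabla_{\delta}f(\x)\|_{E,*}^2$ with the square, consistent with Lemma~\ref{Lm:MDProp}.
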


We consider problem \eqref{eq:PrSt} in two different settings, namely, non-smooth Lipschitz-continuous objective function $f$ and general objective function $f$, which is not necessarily Lipschitz-continuous, e.g. a quadratic function. In both cases, we assume that $g$ is non-smooth and is Lipschitz-continuous
\begin{equation}
|g(\x)-g(\y)|\leq M_g\|\x-\y\|_E, \quad \x,\y \in X.
\label{eq:gLipCont}
\end{equation}
Let $\x_*$ be a solution to \eqref{eq:PrSt}. We say that a point $\tilde{\x} \in X$ is an \textit{$\e$-solution} to \eqref{eq:PrSt} if
\begin{equation}
 \label{eq:DetSolDef}
 f(\tilde{\x}) - f(\x_{*}) \leq \e, \quad g(\tilde{\x}) \leq \e.
\end{equation}

All considered in this section methods (Algorithms \ref{Alg:MDNS} and \ref{Alg:MDG}) are applicable in the case of using $\delta$-subgradient instead of usual subgradient. For this case we can get an $\e$-solution $\tilde{\x} \in X$:
\begin{equation}
 \label{eq:DetSolDefDelta}
 f(\tilde{\x}) - f(\x_{*}) \leq \e + O(\delta), \quad g(\tilde{\x}) \leq \e + O(\delta).
\end{equation}
The methods we describe are based on the of Polyak's switching subgradient method \cite{polyak1967general} for constrained convex problems, also analyzed in \cite{nesterov2010introduction}, and Mirror Descent method originated in \cite{nemirovsky1983problem}; see also \cite{nemirovskii1979efficient}.

%%=====================================
\subsection{Convex Non-Smooth Objective Function}
\label{S:CNSOF}
%%=====================================
In this subsection, we assume that $f$ is a non-smooth Lipschitz-continuous function
\begin{equation}
|f(\x)-f(\y)|\leq M_f\|\x-\y\|_E, \quad \x,\y \in X.
\label{eq:fLipCont}
\end{equation}
Let $\x_*$ be a solution to \eqref{eq:PrSt} and assume that we know a constant $\Theta_0 > 0$ such that
\begin{equation}
d(\x_{*}) \leq \Theta_0^2.
\label{eq:dx*Bound}
\end{equation}
For example, if $X$ is a compact set, one can choose $\Theta_0^2 = \max_{ \x \in X} d(\x)$.

\begin{algorithm}[h!]
\caption{Adaptive Mirror Descent (Non-Smooth Objective)}
\label{Alg:MDNS}
\begin{algorithmic}[1]
 \REQUIRE accuracy $\e > 0$; $\Theta_0$ s.t. $d(\x_{*}) \leq \Theta_0^2$.
 \STATE $\x^0 = \arg\min\limits_{\x\in X} d(\x)$.
	\STATE Initialize the set $I$ as empty set.
	\STATE Set $k=0$.
	\REPEAT
 \IF{$g(\x^k) \leq \e$}
 \STATE $M_k = \| \nabla f(\x^k) \|_{E,*}$,
 \STATE $h_k = \frac{\e}{M_k^2}$
 \STATE $\x^{k+1} =\mathrm{Mirr}[\x^k](h_k \nabla f(\x^k))$ ("productive step")
 \STATE Add $k$ to $I$.
 \ELSE
 \STATE $M_k = \| \nabla g(\x^k) \|_{E,*}$
 \STATE $h_k = \frac{\e}{M_k^2}$
 \STATE $\x^{k+1} = \mathrm{Mirr}[\x^k](h_k \nabla g(\x^k))$ ("non-productive step")
 \ENDIF
 \STATE Set $k = k + 1$.
 \UNTIL{$\sum\limits_{j =0 }^{k-1} \frac{1}{M_j^2} \geq \frac{2\Theta_0^2}{\e^2}$}
 \ENSURE $\bar{\x}^k:= \frac{\sum\limits_{i\in I} h_i \x^i}{\sum\limits_{i\in I} h_i}$
\end{algorithmic}
\end{algorithm}

\begin{theorem}
	\label{Th:MDCompl}
	Assume that inequalities \eqref{eq:gLipCont} and \eqref{eq:fLipCont} hold and a known constant $\Theta_0 > 0$ is such that $
d(\x_{*}) \leq \Theta_0^2$. Then, Algorithm \ref{Alg:MDNS} stops after not more than
	\begin{equation}
	k = \left\lceil\frac{2\max\{M_f^2,M_g^2\} \Theta_0^2}{\e^2}\right\rceil
	\label{eq:MDNSComplEst}
	\end{equation}
	iterations and $\bar{\x}^k$ is an $\e$-solution to \eqref{eq:PrSt} in the sense of \eqref{eq:DetSolDef}.
\end{theorem}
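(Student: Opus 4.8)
The plan is to prove the two assertions of the theorem separately: the bound on the stopping time, and the fact that $\bar{\x}^k$ is an $\e$-solution. The single engine behind everything is Lemma \ref{Lm:MDProp}, applied at every iteration with $\z = \x_*$ --- on a productive step to $f$, on a non-productive step to $g$. The key algebraic simplification I will exploit repeatedly is that the step-size $h_j = \e/M_j^2$ is tuned so that the quadratic term in Lemma \ref{Lm:MDProp} collapses: $\frac{h_j^2}{2}M_j^2 = \frac{h_j\e}{2}$, where $M_j$ is the dual norm of the subgradient chosen at step $j$. I will also use that, since $\x^0 = \arg\min_{\x\in X} d(\x)$ and $d \ge 0$, first-order optimality gives $V[\x^0](\x_*) \le d(\x_*) \le \Theta_0^2$.

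The stopping-time bound is the easy part. On every iteration $M_j$ equals either $\|\nabla f(\x^j)\|_{E,*} \le M_f$ or $\|\nabla g(\x^j)\|_{E,*} \le M_g$, hence $M_j \le \max\{M_f,M_g\}$ and $1/M_j^2 \ge 1/\max\{M_f^2,M_g^2\}$. Therefore $\sum_{j=0}^{k-1} 1/M_j^2 \ge k/\max\{M_f^2,M_g^2\}$, so the stopping criterion $\sum_{j=0}^{k-1}1/M_j^2 \ge 2\Theta_0^2/\e^2$ must be satisfied once $k \ge 2\max\{M_f^2,M_g^2\}\Theta_0^2/\e^2$, which yields \eqref{eq:MDNSComplEst}.

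For the solution quality, let $I$ index the productive steps and $J$ the non-productive ones among $0,\dots,k-1$, and set $S_I = \sum_{i\in I}h_i$, $S_J = \sum_{i\in J}h_i$. On a productive step Lemma \ref{Lm:MDProp} together with $h_i M_i^2 = \e$ gives $h_i(f(\x^i)-f(\x_*)) \le \frac{h_i\e}{2} + V[\x^i](\x_*) - V[\x^{i+1}](\x_*)$. On a non-productive step, applying the lemma to $g$ and using $g(\x_*)\le 0$ together with the defining inequality $g(\x^i) > \e$, I obtain the strict estimate $\frac{h_i\e}{2} < V[\x^i](\x_*) - V[\x^{i+1}](\x_*)$. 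Summing all $k$ inequalities and telescoping the Bregman terms (bounded above by $V[\x^0](\x_*)\le\Theta_0^2$) produces $\sum_{i\in I}h_i(f(\x^i)-f(\x_*)) + \frac{\e}{2}S_J \le \frac{\e}{2}S_I + \Theta_0^2$. Feasibility is then immediate: by convexity of $g$, Jensen's inequality, and $g(\x^i)\le\e$ on productive steps, $g(\bar{\x}^k)\le\e$.

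The main obstacle --- and the one genuinely clever point --- is that $\bar{\x}^k$ averages over the productive steps only, so a naive reading would demand $S_I \ge 2\Theta_0^2/\e$, whereas the stopping rule controls only the total $S_I + S_J = \e\sum_{j=0}^{k-1}1/M_j^2 \ge 2\Theta_0^2/\e$. The resolution is to \emph{not} discard the non-productive term $\frac{\e}{2}S_J$. Dividing the displayed inequality by $S_I$ and using convexity of $f$ gives $f(\bar{\x}^k) - f(\x_*) \le \frac{\e}{2} + \frac{\Theta_0^2}{S_I} - \frac{\e S_J}{2 S_I}$; substituting the lower bound $S_J \ge 2\Theta_0^2/\e - S_I$ makes the two $\Theta_0^2/S_I$ terms cancel exactly, leaving $f(\bar{\x}^k)-f(\x_*)\le\e$. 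The remaining loose end is that $S_I > 0$ so that $\bar{\x}^k$ is well defined; this holds because if every step were non-productive the strict non-productive estimate would force $\frac{\e}{2}S_J < \Theta_0^2$, contradicting $S_J \ge 2\Theta_0^2/\e$. Hence $I\neq\emptyset$ and the proof is complete.
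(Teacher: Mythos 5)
Your proof is correct and follows the same argument as the one the paper defers to in \cite{bayandina2018mirror}: apply Lemma \ref{Lm:MDProp} with $\z=\x_*$ to $f$ on productive steps and to $g$ on non-productive steps, exploit $h_j M_j^2=\e$ to collapse the quadratic term, telescope the Bregman divergences against $V[\x^0](\x_*)\le\Theta_0^2$, and use the stopping rule to cancel the $\Theta_0^2/S_I$ term. Your observations that the non-productive term $\tfrac{\e}{2}S_J$ must be retained rather than discarded, and that $I\neq\emptyset$ follows from the same strict inequality, are exactly the right resolutions of the two delicate points.
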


Let us now show that Algorithm \ref{Alg:MDNS} allows to reconstruct an approximate solution to the problem, which is dual to \eqref{eq:PrSt}. We consider a special type of problem \eqref{eq:PrSt} with $g$ given by
\begin{equation}
g(\x) = \max\limits_{i \in \{1,..., m\}} \big\{ g_i(\x) \big\}.
\label{eq:gMaxDef}
\end{equation}
Then, the dual problem to \eqref{eq:PrSt} is
\begin{equation}\label{eq:dualPrSt}
 \varphi (\lambda) = \min\limits_{\x\in X} \Big\{ f(\x) + \sum\limits_{i=1}^{m} \lambda_i g_i(\x) \Big\} \rightarrow \max\limits_{\lambda_i \geq 0, i = 1,..., m}\varphi (\lambda),
\end{equation}
where $\lambda_i \geq 0, i = 1,..., m$ are Lagrange multipliers.

We slightly modify the assumption \eqref{eq:dx*Bound} and assume that the set $X$ is bounded and that we know a constant $\Theta_0 > 0$ such that
$$
\max_{\x \in X} d(\x) \leq \Theta_0^2.
$$

As before, denote $[k] = \{ j \in \{0,...,k-1\} \}$, $J = [k] \setminus I$.
Let $j \in J$. Then a subgradient of $g(\x)$ is used to make the $j$-th step of Algorithm \ref{Alg:MDNS}. To find this subgradient, it is natural to find an active constraint $i \in 1,..., m$ such that $g(\x^j) = g_{i}(\x^j)$ and use $\nabla g(\x^j) = \nabla g_{i}(\x^j)$ to make a step. Denote $i(j) \in 1,..., m$ the number of active constraint, whose subgradient is used to make a non-productive step at iteration $j \in J$. In other words, $g(\x^j) = g_{i(j)}(\x^j)$ and $\nabla g(\x^j) = \nabla g_{i(j)}(\x^j)$.
We define an approximate dual solution on a step $k\geq 0$ as
 \begin{equation}\label{eq:lamDef}
 \bar{\lambda}_i^k = \frac{1}{\sum\limits_{j\in I} h_j} \sum\limits_{j\in J, i(j) = i} h_j, \quad i \in \{1,..., m\}.
 \end{equation}
		and modify Algorithm \ref{Alg:MDNS} to return a pair $(\bar{\x}^k,\bar{\lambda}^k)$.

\begin{theorem}
\label{Th:MDPDCompl}
 Assume that the set $X$ is bounded, the inequalities \eqref{eq:gLipCont} and \eqref{eq:fLipCont} hold and a known constant $\Theta_0 > 0$ is such that $
d(\x_{*}) \leq \Theta_0^2$.
 Then, modified Algorithm \ref{Alg:MDNS} stops after not more than
	\begin{equation}
	k = \left\lceil\frac{2\max\{M_f^2,M_g^2\} \Theta_0^2}{\e^2}\right\rceil \notag
		\end{equation}
	iterations and the pair $(\bar{\x}^k,\bar{\lambda}^k)$ returned by this algorithm satisfies
 \begin{equation}
		\label{eq:DetPDSolDef}
 f(\bar{\x}^k) - \varphi(\bar{\lambda}^k) \leq \e, \quad g(\bar{\x}^k) \leq \e.
 \end{equation}
\end{theorem}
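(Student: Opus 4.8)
The plan is to run the single chain of inequalities behind Theorem~\ref{Th:MDCompl}, but to keep a \emph{free} comparison point $\z \in X$ instead of fixing $\z = \x_*$; this free point is exactly what will let me recognise the dual function $\varphi(\bar{\x}^k)$'s argument $\varphi(\bar\lambda^k)$ at the end.

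First I would apply Lemma~\ref{Lm:MDProp} at every iteration $j$: on a productive step $j \in I$ with the objective $f$, and on a non-productive step $j \in J$ with the active component $g_{i(j)}$, for which $\nabla g(\x^j) = \nabla g_{i(j)}(\x^j)$ and $g_{i(j)}(\x^j) = g(\x^j)$. Since $h_j = \e/M_j^2$ with $M_j$ the norm of the subgradient used at step $j$, the quadratic term collapses to $\tfrac{h_j^2}{2}M_j^2 = \tfrac{h_j \e}{2}$, so for any $\z\in X$ the two families read
\[
h_j\big(f(\x^j)-f(\z)\big) \le \tfrac{h_j\e}{2} + V[\x^j](\z) - V[\x^{j+1}](\z), \quad j \in I,
\]
\[
h_j\big(g_{i(j)}(\x^j)-g_{i(j)}(\z)\big) \le \tfrac{h_j\e}{2} + V[\x^j](\z) - V[\x^{j+1}](\z), \quad j \in J.
\]
Summing all of them lets the Bregman terms telescope to $V[\x^0](\z) - V[\x^k](\z) \le \Theta_0^2$ (using $\x^0 = \arg\min_{\x\in X} d$ and $\max_X d \le \Theta_0^2$). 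On the non-productive steps I then insert the defining inequality $g_{i(j)}(\x^j) = g(\x^j) > \e$, which converts that part of the left-hand side into a term I can move across.

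Next I would divide through by $S_I := \sum_{j\in I}h_j$ and read off the dual function. By construction $\tfrac{1}{S_I}\sum_{j\in J} h_j g_{i(j)}(\z) = \sum_{i=1}^m \bar\lambda_i^k g_i(\z)$, exactly the Lagrangian penalty with the weights \eqref{eq:lamDef}, while convexity of $f$ and Jensen give $f(\bar{\x}^k) \le \tfrac{1}{S_I}\sum_{j\in I} h_j f(\x^j)$. Assembling these I expect to reach, for every $\z \in X$,
\[
f(\bar{\x}^k) \le f(\z) + \sum_{i=1}^m \bar\lambda_i^k g_i(\z) + \frac{\e}{2} + \frac{\Theta_0^2}{S_I}.
\]
Taking the infimum over $\z\in X$ on the right collapses the first two terms into $\varphi(\bar\lambda^k)$, so $f(\bar{\x}^k) - \varphi(\bar\lambda^k) \le \tfrac{\e}{2} + \Theta_0^2/S_I$. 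The termination rule guarantees $S_I \ge 2\Theta_0^2/\e$ (this is the quantitative content of the stopping test that already drives Theorem~\ref{Th:MDCompl}), whence $\Theta_0^2/S_I \le \e/2$ and the gap is at most $\e$. Since each $\bar\lambda_i^k \ge 0$, the multiplier is dual-feasible, so this is a genuine duality-gap estimate.

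Finally, the feasibility bound $g(\bar{\x}^k) \le \e$ is the easy half: $\bar{\x}^k$ is a convex combination (weights $h_j/S_I$) of the productive iterates $\x^j$, $j\in I$, each satisfying $g(\x^j) \le \e$ by the productive-step test, so convexity of $g$ yields $g(\bar{\x}^k) \le \tfrac{1}{S_I}\sum_{j\in I} h_j g(\x^j) \le \e$; the iteration count is inherited verbatim from Theorem~\ref{Th:MDCompl}. The main obstacle I anticipate is the bookkeeping in the summation: keeping $\z$ free throughout rather than specialising to $\x_*$, matching the non-productive weights to the dual multipliers through \eqref{eq:lamDef}, and checking that the leftover term $-\tfrac{\e}{2}\sum_{j\in J}h_j$ carries a favourable sign so that it may simply be discarded. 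Everything beyond that is the standard Mirror Descent telescoping.
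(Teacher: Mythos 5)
The paper itself does not reproduce a proof of Theorem~\ref{Th:MDPDCompl} (it defers to the cited source), so I can only judge your argument on its own terms. Your overall strategy --- run the telescoping of Lemma~\ref{Lm:MDProp} with a free comparison point $\z\in X$, use $g_{i(j)}(\x^j)=g(\x^j)>\e$ on non-productive steps, divide by $S_I=\sum_{j\in I}h_j$, and recognise $\sum_i\bar\lambda_i^k g_i(\z)$ via \eqref{eq:lamDef} before minimising over $\z$ --- is exactly the right one, and the feasibility half ($g(\bar\x^k)\le\e$ by convexity over productive iterates) and the iteration count are fine.

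However, there is a genuine quantitative gap at the very end. You discard the term $-\tfrac{\e}{2}\sum_{j\in J}h_j$ as merely ``favourable'' and then invoke $S_I\ge 2\Theta_0^2/\e$ to kill $\Theta_0^2/S_I$. That last inequality is not what the stopping rule gives you: the test $\sum_{j=0}^{k-1}M_j^{-2}\ge 2\Theta_0^2/\e^2$ is a sum over \emph{all} steps, so it only yields $S_I+S_J\ge 2\Theta_0^2/\e$ with $S_J=\sum_{j\in J}h_j$; if most steps are non-productive, $S_I$ alone can be arbitrarily smaller than $2\Theta_0^2/\e$ (the algorithm only guarantees $I\neq\emptyset$, which itself deserves a one-line argument you omit). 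The repair is precisely to \emph{keep} the term you threw away: after inserting $g_{i(j)}(\x^j)>\e$ you arrive at
\begin{equation*}
f(\bar\x^k)-\Bigl(f(\z)+\sum_{i=1}^m\bar\lambda_i^k g_i(\z)\Bigr)\le \frac{\e}{2}+\frac{\Theta_0^2-\tfrac{\e}{2}S_J}{S_I},
\end{equation*}
and the right-hand side is at most $\e$ exactly when $\tfrac{\e}{2}(S_I+S_J)\ge\Theta_0^2$, i.e.\ exactly when the stopping criterion fires. So the $-\tfrac{\e}{2}S_J$ contribution is not a disposable bonus; it is the ingredient that converts the stopping rule (which controls $S_I+S_J$) into the bound you need (which you tried to state in terms of $S_I$ only). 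With that correction, and a short observation that $I$ cannot be empty at termination (otherwise the non-productive inequalities at $\z=\x_*$ force $S_J<2\Theta_0^2/\e$, contradicting the stopping rule), your proof closes.
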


Now we consider an interesting example of huge-scale problem
\cite{nesterov2014subgradient, nesterov2014primal-dual} with a sparse structure.
We would like to illustrate two important ideas. Firstly, consideration of the dual problem can simplify the solution, if it is possible to reconstruct the solution of the primal problem by solving the dual problem. Secondly, for a special sparse non-smooth piece-wise linear functions we suggest a very efficient implementation of one subgradient iteration \cite{nesterov2014subgradient}. In such cases simple subgradient methods (for example, Algorithm \ref{Alg:MDNS}) can be useful due to the relatively inexpensive cost of iterations.

Recall (see e.g. \cite{nesterov2014primal-dual}) that Truss Topology Design problem consists in finding the best mechanical structure resisting
to an external force with an upper bound for the total weight of construction. Its
mathematical formulation looks as follows:
\begin{equation}\label{4.1}
\min_{w\in R^{m}_{+}}\{\langle \overline{\f},\z\rangle:\,A(\bw)\z=\overline{\f},\,\langle \be,\bw\rangle=T\},
\end{equation}
where $\overline{\f}$ is a vector of external forces, $\z\in R^{2n}$ is a vector of virtual displacements of $n$ nodes in $R^{2}$, $\bw$ is a vector of $m$ bars, and $T$ is the total weight of construction. The compliance matrix $A(\bw)$ has the following form:
$$A(\bw)=\sum_{i=1}^{m}\bw_{i}\ba_{i}\ba_{i}^{T},$$
where $\ba_{i}\in R^{2n}$ are the vectors describing the interactions of two nodes connected by an arc. These vectors are very sparse: for 2D-model they have at most 4 nonzero elements.

Let us rewrite the problem \eqref{4.1} as a Linear Programming problem.
\begin{equation}\label{4.2}
\begin{split}
&\min_{\z,\bw}\{\langle \overline{\f},\z\rangle:\,A(\bw)\z=\overline{\f},\,\bw\geq0,\,\langle \be,\bw\rangle=T\}=\\
&=\min_{\bw}\{\langle \overline{\f},A^{-1}(\bw)\overline{\f}\rangle:\,\bw\in \triangle(T)=\{\,\bw\geq0,\,\langle \be,\bw\rangle=T\}\}=\\
&=\min_{\bw\in \triangle(T)}\max_{\z}\{2\langle\overline{\f},\z\rangle-\langle A(\bw)\z,\z\rangle\}\geq\max_{\z}\min_{w\in \triangle(T)}\{2\langle\overline{\f},\z\rangle-\langle A(w)\z,\z\rangle\}=\\
&=\max_{\z}\{2\langle\overline{\f},\z\rangle- T\max_{1\leq i\leq m}\langle \ba_{i},\z\rangle^{2}\}=\max_{\lambda,\y}\{2\lambda\langle\overline{\f},\y\rangle- \lambda^{2}T\max_{1\leq i\leq m}\langle \ba_{i},\y\rangle^{2}\}=\\
&=\max_{\y}\frac{\langle\overline{\f},\y\rangle^{2}}{T\max\limits_{1\leq i\leq m}\langle \ba_{i}, \y\rangle^{2}}=\frac{1}{T}\left(\max_{\y}\{\langle \overline{\f},\y\rangle:\,\max_{1\leq i\leq m}|\langle \ba_{i}, \y\rangle|\leq1\}\right)^{2}.
\end{split}
\end{equation}
Note that for the inequality in the third line we do not need any assumption.

Denote by $\y^{*}$ the optimal solution of the optimization problem in the brackets. Then
there exist multipliers $\x^{*}\in R^{m}_{+}$ such that
\begin{equation}\label{4.3}
\overline{\f}=\sum_{i\in J_{+}}\ba_{i}\x_{i}^{*}-\sum_{i\in J_{-}}\ba_{i} \x_{i}^{*},\qquad \x_{i}^{*}=0,\, i\not\in J_{+}\bigcap J_{-},
\end{equation}
where $J_{+}=\{i:\,\langle \ba_{i},\y^{*}\rangle=1\}$, and $J_{-}=\{i:\,\langle \ba_{i},\y^{*}\rangle=-1\}$. Multiplying the first equation
in \eqref{4.3} by $\y^{*}$, we get
\begin{equation}\label{4.4}
\langle \overline{\f},\y^{*}\rangle=\langle \be,\x^{*}\rangle.
\end{equation}
Note that the first equation in \eqref{4.3} can be written as
\begin{equation}\label{4.5}
\overline{\f}=A(\x^{*})\y^{*}.
\end{equation}
Let us reconstruct now the solution of the primal problem. Denote
\begin{equation}\label{4.6}
w^{*}=\frac{T}{\langle \be,\x^{*}\rangle}\cdot \x^{*},\qquad \z^{*}=\frac{\langle \be,\x^{*}\rangle}{T}\cdot \y^{*}.
\end{equation}
Then, in view of \eqref{4.5} we have $\overline{\f}=A(\bw^{*})\z^{*}$, and $\bw^{*}\in \triangle(T)$. Thus, the pair \eqref{4.6} is
feasible for the primal problem. On the other hand,
$$\langle \overline{\f},\z^{*}\rangle=\langle \overline{\f},\frac{\langle \be,\x^{*}\rangle}{T}\cdot \y^{*}\rangle =\frac{1}{T}\cdot\langle \be,\x^{*}\rangle\cdot\langle \overline{\f},\y^{*}\rangle=\frac{1}{T}\cdot\langle \overline{\f},\y^{*}\rangle^{2}.$$
Thus, the duality gap in the chain \eqref{4.2} is zero, and the pair $(\bw^{*}, \z^{*})$, defined by \eqref{4.6} is the optimal solution of the primal problem.

The above discussion allows us to concentrate on the following (dual) Linear Programming
problem:
\begin{equation}\label{4.7}
\max_{\y}\{ \iprod{\bar \f}{\y}:\,\max_{1 \leq i \leq m} \iprod{\pm \ba_{i}} {\y} \leq 1\},
\end{equation}
which we can solve by the primal-dual Algorithm \ref{Alg:MDNS}.

Assume that we have {\it local}
truss: each node is connected only with few neighbors.
It allows to apply the property of {\it sparsity} for vectors $\ba_{i}$ ($1\leq i\leq m$).
In this case the computational cost of each iteration grows as $O(\log_{2} m)$ \cite{nesterov2014subgradient, nesterov2014primal-dual}.

In \cite{nesterov2014subgradient} a special class of huge-scale problems with sparse subgradient was considered.
According to \cite{nesterov2014subgradient} for smooth functions this is a very rare feature.
For example, for quadratic function $f(\y) = \frac{1}{2} \iprod{A\y}{\y} $
the gradient $\nabla f(\y) = A\y$ usually is dense even for a sparse matrix $A$.

However, the subgradient of non-smooth function
$f(\y) = \max_{1\leq i\leq m} \iprod{\ba_{i}}{\y} $
(see \eqref{4.7} above) are sparse provided that all vectors $\ba_i$ share this property.
This fact is based on the following observation.
For the function $f(\y) = \max_{1\leq i\leq m}\iprod{\ba_{i}}{\y}$
with sparse matrix $A = (\ba_{1}, \ba_{2},..., \ba_{m})$
the vector $\nabla f(\y) = \ba_{i(\y)}$ is a subgradient at point $\y$.
Then the standard subgradient step
$$\y_{+} = \y - h \cdot \nabla f(\y)$$
changes only a few entries of vector $\y$ and the vector $\z_+ = A^{T} \y_+$ differs from
$\z = A^{T}\y$ also in a few positions only.
Thus, the function value $f(\y_+)$ can be easily updated provided that we have an efficient procedure
for recomputing the maximum of $m$ values.

Note the objective functional in \eqref{4.7} is linear and the costs of iteration of Algorithm \ref{Alg:MDNS}
and considered in \cite{nesterov2014subgradient} switching simple subgradient scheme is comparable.
At the same time, the step productivity condition is simpler for Algorithm \ref{Alg:MDNS}
as considered in \cite{nesterov2014subgradient} switching subgradient scheme.
Therefore main observations for \cite{nesterov2014subgradient} are correct for Algorithm \ref{Alg:MDNS}.

%%=====================================
\subsection{General Convex and Quasi-Convex Objective Functions}
\label{GCandQCobj}
%%=====================================

In this subsection, we assume that the objective function $f$ in
\eqref{eq:PrSt} might not satisfy \eqref{eq:fLipCont} and,
hence, its subgradient could be unbounded.
One of the examples is a quadratic function.
We also assume that inequality \eqref{eq:dx*Bound} holds.

We further consider ideas in \cite{nesterov2010introduction,nesterov2015subgradient} and adapt
them for problem \eqref{eq:PrSt}, in a way that our algorithm allows to use non-Euclidean proximal setup,
as does Mirror Descent, and does not require to know the constant $M_g$.
Following \cite{nesterov2010introduction},
given a function $f$ for each subgradient $\nabla f(\x)$ at a point $\y \in X$,
we define
\begin{equation}
v_f[\y](\x)=\left\{
\begin{aligned}
&\left\la\frac{\nabla f(\x)}{\|\nabla f(\x)\|_{E,*}},\x-\y\right\ra, \quad &\nabla f(\x) \ne 0\\
&0 &\nabla f(\x) = 0\\
\end{aligned}
\right.,\quad \x \in X.
\label{eq:vfDef}
\end{equation}

\begin{algorithm}[h!]
\caption{Adaptive Mirror Descent (General Convex Objective)}
\label{Alg:MDG}
\begin{algorithmic}[1]
 \REQUIRE accuracy $\e > 0$; $\Theta_0$ s.t. $d(\x_{*}) \leq \Theta_0^2$.
 \STATE $\x^0 = \arg\min\limits_{\x\in X} d(\x)$.
	\STATE Initialize the set $I$ as empty set.
	\STATE Set $k=0$.
	\REPEAT
 \IF{$g(\x^k) \leq \e$}
 \STATE $h_k = \frac{\e}{\| \nabla f(\x^k) \|_{E,*}}$
 \STATE $\x^{k+1} =\mathrm{Mirr}[\x^k](h_k \nabla f(\x^k))$ ("productive step")
 \STATE Add $k$ to $I$.
 \ELSE
 \STATE $h_k = \frac{\e}{\| \nabla g(\x^k) \|_{E,*}^2}$
 \STATE $\x^{k+1} = \mathrm{Mirr}[\x^k](h_k \nabla g(\x^k))$ ("non-productive step")
 \ENDIF
 \STATE Set $k = k + 1$.
 \UNTIL{$|I|+\sum\limits_{j \in J} \frac{1}{\| \nabla g(\x^j) \|_{E,*}^2} \geq \frac{2\Theta_0^2}{\e^2} $}
 \ENSURE $\bar{\x}^k:= \arg \min_{\x^j, j\in I} f(\x^j)$
\end{algorithmic}
\end{algorithm}

The following result gives complexity estimate for Algorithm \ref{Alg:MDG} in terms of $v_f[\x_*](\x)$.
Below we use this theorem to establish complexity result for smooth objective $f$.
\begin{theorem}
	\label{Th:MDGCompl}
	Assume that inequality \eqref{eq:gLipCont} holds and a known constant $\Theta_0 > 0$ is such that $
d(\x_{*}) \leq \Theta_0^2$. Then, Algorithm \ref{Alg:MDG} stops after not more than
	\begin{equation}
	k = \left\lceil\frac{2\max\{1,M_g^2\} \Theta_0^2}{\e^2}\right\rceil
	\label{eq:MDGComplEst}
	\end{equation}
	iterations and it holds that
	$\min_{i \in I} v_f[\x_*](\x^i) \leq \e$ and $g(\bar{\x}^k)\leq \e$.
\end{theorem}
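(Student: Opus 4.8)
The plan is to combine the Mirror Descent step property (Lemma~\ref{Lm:MDProp}) with the special step-size policy of Algorithm~\ref{Alg:MDG}, separating the analysis over productive steps $i \in I$ (where $g(\x^i) \le \e$) and non-productive steps $j \in J$ (where $g(\x^j) > \e$). I would first write down the telescoping inequality obtained by summing \eqref{eq:MDProp} over all iterations $k=0,\dots,k-1$ with $\z = \x_*$. The key point is that on productive steps the step-size is $h_i = \e/\|\nabla f(\x^i)\|_{E,*}$, so the term $h_i\la \nabla f(\x^i), \x^i - \x_*\ra$ becomes $\e\, v_f[\x_*](\x^i)$ by the very definition \eqref{eq:vfDef}, while the quadratic penalty $\tfrac{h_i^2}{2}\|\nabla f(\x^i)\|_{E,*}^2 = \tfrac{\e^2}{2}$ is a clean constant. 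On non-productive steps the step-size is $h_j = \e/\|\nabla g(\x^j)\|_{E,*}^2$; here I would instead apply the step inequality to the constraint $g$ and use convexity together with $g(\x^j) > \e \ge g(\x_*)$ (feasibility of $\x_*$) to show that each such term contributes a strictly negative amount, which can be dropped.

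Carrying this out, summing \eqref{eq:MDProp} telescopes the Bregman terms to $V[\x^0](\x_*) - V[\x^k](\x_*) \le d(\x_*) \le \Theta_0^2$, using $V[\x^0](\x_*) = d(\x_*)$ since $\x^0 = \arg\min d$. I would then isolate the productive contributions: the productive steps yield
\[
\e \sum_{i \in I} v_f[\x_*](\x^i) \le \Theta_0^2 + \frac{\e^2}{2}|I| + \frac{\e^2}{2}\sum_{j \in J}\frac{1}{\|\nabla g(\x^j)\|_{E,*}^2},
\]
after bounding the non-productive quadratic terms. The stopping criterion of Algorithm~\ref{Alg:MDG} is exactly $|I| + \sum_{j\in J}\|\nabla g(\x^j)\|_{E,*}^{-2} \ge 2\Theta_0^2/\e^2$, so at the stopping index the right-hand side is dominated by $\e^2$ times that sum. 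Dividing through and using $\min_{i\in I} v_f[\x_*](\x^i) \le \tfrac{1}{|I|}\sum_{i\in I} v_f[\x_*](\x^i)$ together with the stopping bound will deliver $\min_{i\in I} v_f[\x_*](\x^i) \le \e$.

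For the iteration count \eqref{eq:MDGComplEst} I would argue that the algorithm cannot run too long: on productive steps each summand in the stopping sum is exactly $1$, and on non-productive steps $1/\|\nabla g(\x^j)\|_{E,*}^2 \ge 1/M_g^2$ by the Lipschitz bound \eqref{eq:gLipCont}. Hence each iteration increases the accumulated sum by at least $\min\{1, 1/M_g^2\} = 1/\max\{1,M_g^2\}$, so the threshold $2\Theta_0^2/\e^2$ is reached within $\lceil 2\max\{1,M_g^2\}\Theta_0^2/\e^2\rceil$ steps. The bound $g(\bar{\x}^k) \le \e$ is essentially immediate because $\bar{\x}^k = \arg\min_{\x^j, j\in I} f(\x^j)$ is chosen among productive iterates, all of which satisfy $g(\x^i) \le \e$ by the productive-step condition.

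\textbf{The main obstacle} I anticipate is ensuring that the set $I$ is nonempty and that the productive-term bookkeeping is tight enough that the non-productive quadratic penalties genuinely cancel rather than merely being absorbed. Concretely, the delicate step is showing the non-productive terms $h_j \la \nabla g(\x^j), \x^j - \x_*\ra$ are non-positive: this needs $\la \nabla g(\x^j), \x^j - \x_*\ra \ge g(\x^j) - g(\x_*) > \e - 0 > 0$, which is the right sign to \emph{drop} them from the productive-side inequality, but one must be careful that the residual $\tfrac{h_j^2}{2}\|\nabla g(\x^j)\|_{E,*}^2 = \tfrac{\e^2}{2}\cdot\|\nabla g(\x^j)\|_{E,*}^{-2}$ is exactly what the stopping sum accounts for. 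Reconciling these two roles of the non-productive steps — they must be simultaneously negligible on the objective side and controlled on the stopping side — is where the argument requires the most care.
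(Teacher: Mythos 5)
Your overall architecture is the intended one: telescope Lemma~\ref{Lm:MDProp} with $\z=\x_*$, split the sum into productive steps $I$ (where $h_i\la\nabla f(\x^i),\x^i-\x_*\ra=\e\, v_f[\x_*](\x^i)$ and the quadratic penalty is exactly $\e^2/2$) and non-productive steps $J$ (handled via $g$ and feasibility of $\x_*$), invoke the stopping rule, and get the iteration count from the per-step increment $\min\{1,1/M_g^2\}$ of the stopping sum. The iteration-count argument and the claim $g(\bar{\x}^k)\le\e$ are correct as you state them.

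However, your central displayed inequality carries the wrong sign on the non-productive quadratic sum, and with that sign the conclusion does not follow. For $j\in J$, Lemma~\ref{Lm:MDProp} applied to $g$ with $h_j=\e/\|\nabla g(\x^j)\|_{E,*}^2$ gives $h_j\bigl(g(\x^j)-g(\x_*)\bigr)\le\frac{\e^2}{2\|\nabla g(\x^j)\|_{E,*}^2}+V[\x^j](\x_*)-V[\x^{j+1}](\x_*)$, and since $g(\x^j)>\e$, $g(\x_*)\le 0$, the left-hand side exceeds $\e^2/\|\nabla g(\x^j)\|_{E,*}^2$, which is \emph{twice} the quadratic penalty. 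So each non-productive step contributes a net $-\frac{\e^2}{2\|\nabla g(\x^j)\|_{E,*}^2}$ to the budget, not merely something droppable, and the correct bound is
\begin{equation*}
\e\sum_{i\in I}v_f[\x_*](\x^i)\;\le\;\Theta_0^2+\frac{\e^2}{2}|I|-\frac{\e^2}{2}\sum_{j\in J}\frac{1}{\|\nabla g(\x^j)\|_{E,*}^2},
\end{equation*}
with a minus sign where you wrote a plus. Only then does the stopping rule $\Theta_0^2\le\frac{\e^2}{2}\bigl(|I|+\sum_{j\in J}\|\nabla g(\x^j)\|_{E,*}^{-2}\bigr)$ collapse the right-hand side to $\e^2|I|$ and yield $\min_{i\in I}v_f[\x_*](\x^i)\le\e$. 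With your plus sign the same substitution gives only $\frac{1}{|I|}\sum_{i\in I}v_f[\x_*](\x^i)\le\e\bigl(1+\frac{1}{|I|}\sum_{j\in J}\|\nabla g(\x^j)\|_{E,*}^{-2}\bigr)$, which is not $\le\e$. Two smaller points: the terms $h_j\la\nabla g(\x^j),\x^j-\x_*\ra$ are \emph{positive} (not non-positive); it is their subtraction from the right-hand side that produces the negative contribution. And you never actually establish $I\ne\emptyset$, which is needed for $\min_{i\in I}$ and for $\bar{\x}^k$ to be defined; it falls out of the corrected inequality, since $I=\emptyset$ together with the strict bound $g(\x^j)>\e$ and the stopping rule forces $0<\Theta_0^2-\frac{\e^2}{2}\sum_{j\in J}\|\nabla g(\x^j)\|_{E,*}^{-2}\le 0$, a contradiction.
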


To obtain the complexity of our algorithm in terms of the values of the objective function $f$, we define non-decreasing function
\begin{equation}
\omega(\tau)=
\left\{
\begin{aligned}
&\max\limits_{\x\in X}\{f(\x)-f(\x_*):\|\x-\x_*\|_{E}\leq \tau\} \quad &\tau \geq 0,\\
&0 & \tau <0.\\
\end{aligned}
\right.
\label{eq:omDef}
\end{equation}
and use the following lemma from \cite{nesterov2010introduction}.
\begin{lemma}
\label{Lm:fGrowthOm}
Assume that $f$ is a convex function. Then, for any $\x \in X$,
\begin{equation}
f(\x) - f(\x_*) \leqslant \omega(v_f[\x_*](\x)).
\label{eq:fGrowth}
\end{equation}
\end{lemma}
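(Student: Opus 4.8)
The plan is to reduce the inequality to the construction of a single \emph{witness point} $\y \in X$ satisfying $\|\y - \x_*\|_E \le v_f[\x_*](\x)$ and $f(\y) \ge f(\x)$. Once such a $\y$ is exhibited, the definition of $\omega$ together with its monotonicity gives at once $f(\x) - f(\x_*) \le f(\y) - f(\x_*) \le \omega(v_f[\x_*](\x))$, which is the claim. First I would dispose of the degenerate branch $\nabla f(\x) = 0$: then $\x$ is a global minimizer, so $f(\x) = f(\x_*)$ and $v_f[\x_*](\x) = 0$, and since $\omega(0) = 0$ the inequality is trivial. So assume $\nabla f(\x) \ne 0$ and set $\tau := v_f[\x_*](\x) = \la \nabla f(\x)/\|\nabla f(\x)\|_{E,*},\, \x - \x_* \ra$. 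Combining the subgradient inequality $f(\x_*) \ge f(\x) + \la \nabla f(\x), \x_* - \x \ra$ with the optimality $f(\x_*) \le f(\x)$ yields $\la \nabla f(\x), \x - \x_* \ra \ge f(\x) - f(\x_*) \ge 0$, so $\tau \ge 0$ and $\omega(\tau)$ is evaluated on its nontrivial branch.

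The geometric core of the argument is the hyperplane $H = \{\z : \la \nabla f(\x), \z - \x \ra = 0\}$ through $\x$ with normal $\nabla f(\x)$. For every $\z$ on the far side $\{\la \nabla f(\x), \z - \x \ra \ge 0\}$, and in particular for every $\z \in H$, the subgradient inequality gives $f(\z) \ge f(\x) + \la \nabla f(\x), \z - \x \ra \ge f(\x)$, so the value of $f$ nowhere drops below $f(\x)$ on $H$. On the other hand, by the dual-norm characterization $\|\nabla f(\x)\|_{E,*} = \max_{\|\bz\|_E \le 1} \la \nabla f(\x), \bz \ra$, the $\|\cdot\|_E$-distance from $\x_*$ to $H$ equals exactly $\la \nabla f(\x), \x - \x_* \ra / \|\nabla f(\x)\|_{E,*} = \tau$. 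Taking $\y$ to be a nearest point of $H$ to $\x_*$ therefore produces $\|\y - \x_*\|_E = \tau$ together with $f(\y) \ge f(\x)$, which is precisely the witness required by the reduction.

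The hard part will be verifying that this witness $\y$ is \emph{admissible} in the maximization defining $\omega$, i.e. that $\y \in X$; this is the only step where the hypothesis that $\x_*$ is a minimizer must genuinely be used, through the first-order optimality condition $\la \nabla f(\x), \x_* - \x \ra \le 0$ which confines $X$ to one side of $H$, together with the convexity of $X$. I expect this to be the main obstacle, because the unconstrained nearest point of $H$ to $\x_*$ need not lie in $X$ when $\x_*$ sits on a corner of the feasible set, so one must either select the witness inside $X$ or invoke the structure of $X$ rather than the bare projection. Once admissibility is secured, the monotonicity of $\omega$ and the reduction of the first paragraph close the proof.
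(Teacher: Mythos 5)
The paper does not actually prove this lemma --- it is quoted from \cite{nesterov2010introduction} --- so I am comparing your argument against the standard proof there. Your construction is exactly that proof: the witness is a nearest point to $\x_*$ of the hyperplane $H=\{\z:\la\nabla f(\x),\z-\x\ra=0\}$, equivalently $\y=\x_*+\tau\bs$ with $\|\bs\|_E=1$ and $\la\nabla f(\x),\bs\ra=\|\nabla f(\x)\|_{E,*}$, and your supporting computations --- the trivial branch $\nabla f(\x)=0$, the sign $\tau\ge 0$, the dual-norm identity $\dist(\x_*,H)=\la\nabla f(\x),\x-\x_*\ra/\|\nabla f(\x)\|_{E,*}=\tau$, and $f\ge f(\x)$ on $H$ via the subgradient inequality --- are all correct.

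The step you flag as ``the hard part'' is, however, a genuine gap, and the route you sketch for closing it cannot work. The inequality $\la\nabla f(\x),\x_*-\x\ra\le 0$ only places the single point $\x_*$ on one side of $H$; it says nothing about where the rest of $X$ lies, and convexity of $X$ does not push the nearest point of $H$ to $\x_*$ into $X$. Indeed, with $\omega$ defined literally as in the paper (maximum over $\y\in X$) the statement is false for some admissible subgradient selections: take $E=\R^2$ with the Euclidean norm, $X=[0,1]\times\{0\}$ and $f(y_1,y_2)=y_1+10y_2$, so that $\x_*=(0,0)$ and at $\x=(1,0)$ one has $v_f[\x_*](\x)=1/\sqrt{101}$, whence $\omega\bigl(v_f[\x_*](\x)\bigr)=1/\sqrt{101}<1=f(\x)-f(\x_*)$. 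The resolution is not a cleverer witness but the definition of $\omega$: in the source the maximum is taken over the whole ball $\{\y\in E:\|\y-\x_*\|_E\le\tau\}$, with $f$ convex and finite there, not over its intersection with $X$. Under that reading your witness is automatically admissible and your argument is complete; under the paper's literal definition no argument can succeed, so you should repair the statement rather than keep searching for a feasible witness.
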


\begin{corollary}
\label{Col:MDSCompl}
Assume that the objective function $f$ in \eqref{eq:PrSt} is given as
$f(\x) = \max_{i \in \{1,...,m\}} f_i(\x)$, where $f_i(\x)$, $i = 1,...,m$
are differentiable with Lipschitz-continuous gradient
\begin{equation}
\|\nabla f_i(\x)-\nabla f_i(\y)\|_{E,*} \leq L_i\|\x-\y\|_{E} \quad \forall \x,\y\in X, \quad i \in \{1,...,m\}.
\label{eq:fLipSm}
\end{equation}
Then $\bar{\x}^k$ is $\widetilde{\varepsilon}$-solution to \eqref{eq:PrSt}
in the sense of \eqref{eq:DetSolDef},
where
$$\widetilde{\varepsilon} = \max\{\e, \e \max_{i = 1,...,m}\|\nabla f_i(\x_*)\|_{E,*}+\e^2\max_{i = 1,...,m}L_i/2\}.$$
\end{corollary}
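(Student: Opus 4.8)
The plan is to combine the two conclusions of Theorem \ref{Th:MDGCompl} --- namely $\min_{i \in I} v_f[\x_*](\x^i) \leq \e$ and $g(\bar{\x}^k) \leq \e$ --- with Lemma \ref{Lm:fGrowthOm} and an explicit upper bound on the growth modulus $\omega$ defined in \eqref{eq:omDef}. The constraint part of \eqref{eq:DetSolDef} is immediate: since $g(\bar{\x}^k)\leq \e \leq \widetilde{\varepsilon}$, it remains only to control the objective residual $f(\bar{\x}^k)-f(\x_*)$.

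First I would exploit the choice of output. Since $\bar{\x}^k = \arg\min_{\x^j,\, j\in I} f(\x^j)$ and the index $i^\star \in I$ attaining $\min_{i\in I}v_f[\x_*](\x^i)\leq \e$ belongs to $I$, we have $f(\bar{\x}^k)\leq f(\x^{i^\star})$. Applying Lemma \ref{Lm:fGrowthOm} at $\x=\x^{i^\star}$ and using that $\omega$ is non-decreasing gives
$$f(\bar{\x}^k)-f(\x_*)\leq f(\x^{i^\star})-f(\x_*)\leq \omega\big(v_f[\x_*](\x^{i^\star})\big)\leq \omega(\e).$$
Thus the whole corollary reduces to the estimate $\omega(\e)\leq \e\max_i\|\nabla f_i(\x_*)\|_{E,*}+\tfrac{\e^2}{2}\max_i L_i$, which is exactly the second term inside the maximum defining $\widetilde{\varepsilon}$.

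The key step --- and the only place where the special structure $f=\max_i f_i$ with $L_i$-smooth pieces (assumption \eqref{eq:fLipSm}) enters --- is bounding $\omega$. For any $\x$ with $\|\x-\x_*\|_E\leq \tau$ and any $i$, the standard upper bound for an $L_i$-smooth function gives $f_i(\x)\leq f_i(\x_*)+\iprod{\nabla f_i(\x_*)}{\x-\x_*}+\tfrac{L_i}{2}\|\x-\x_*\|_E^2$. Since $f(\x_*)=\max_i f_i(\x_*)\geq f_i(\x_*)$, I would then write, for each $i$,
$$f_i(\x)-f(\x_*)\leq \iprod{\nabla f_i(\x_*)}{\x-\x_*}+\tfrac{L_i}{2}\|\x-\x_*\|_E^2 \leq \|\nabla f_i(\x_*)\|_{E,*}\,\tau+\tfrac{L_i}{2}\tau^2,$$
using the definition of the dual norm. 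Taking the maximum over $i$ on the left and bounding the gradient norms and the constants on the right by their maxima yields $f(\x)-f(\x_*)\leq \tau\max_i\|\nabla f_i(\x_*)\|_{E,*}+\tfrac{\tau^2}{2}\max_i L_i$; maximizing the left-hand side over the admissible $\x$ gives the bound on $\omega(\tau)$, and setting $\tau=\e$ finishes it.

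I expect the main (and essentially only nontrivial) obstacle to be this growth-modulus estimate: one must push the maximum over $i$ correctly through the smoothness inequality, which works precisely because $f(\x_*)\geq f_i(\x_*)$ lets us replace $f(\x_*)$ by $f_i(\x_*)$ inside each branch, and because $\tau\geq 0$ lets the maximum of the affine-plus-quadratic terms be bounded by the separate maxima of their coefficients. Everything else --- the reduction via the output rule, the invocation of Lemma \ref{Lm:fGrowthOm}, and the final comparison of $\omega(\e)$ and $\e$ with $\widetilde{\varepsilon}$ --- is routine chaining of the already-established results.
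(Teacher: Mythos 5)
Your proposal is correct and follows exactly the route the paper sets up: Theorem \ref{Th:MDGCompl} supplies $\min_{i\in I}v_f[\x_*](\x^i)\leq\e$ and $g(\bar{\x}^k)\leq\e$, Lemma \ref{Lm:fGrowthOm} converts the first bound into $f(\bar\x^k)-f(\x_*)\leq\omega(\e)$ via the output rule $\bar\x^k=\arg\min_{\x^j,j\in I}f(\x^j)$, and the smoothness of each $f_i$ together with $f_i(\x_*)\leq f(\x_*)$ yields $\omega(\tau)\leq\tau\max_i\|\nabla f_i(\x_*)\|_{E,*}+\tfrac{\tau^2}{2}\max_i L_i$. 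No gaps; this is the intended argument.
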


\begin{remark}
According to \cite{nesterov1989, nesterov2018lectures} main lemma \ref{Lm:fGrowthOm} holds for quasi-convex objective functions
\cite{Polyak1969} too:
\[
f\left( {\alpha \x+\left( {1-\alpha } \right)\y} \right)\le \max \left\{
{f\left( \x \right),f\left( \y \right)} \right\} \text{ for all }\x, \y, \alpha \in
[0,1].
\]
This means that results of this subsection are valid for quasi-convex objectives.
\end{remark}

\begin{remark}
In view of the Lipschitzness and, generally speaking, non-smoothness of functional limitations,
the obtained estimate for the number of iterations means
that the proposed method is optimal from the point of view of oracle evaluations:
$O\left(\frac{1}{\varepsilon^2}\right)$ iterations are sufficient for
achieving the required accuracy $\varepsilon$ of solving the problem for
the class of target functionals considered in this section of the article.
Note also that the considered algorithm \ref{Alg:MDNS} applies to the considered classes of problems
with constraints with convex objective functionals of different smoothness levels.
However, the non-fulfillment, generally speaking, of the Lipschitz condition
for the objective functional $f$ does not allow one to substantiate
the optimality of the algorithms \ref{Alg:MDNS} in the general situation
(for example, with a Lipschitz-continuous gradient).
More precisely, situations are possible when the productive steps
of the norm (sub)gradients of the objective functional $\|\nabla f(\x^k)\|_*$ are large enough
and this will interfere with the speedy achievement of the stopping criterion of the \ref{Alg:MDNS}.
\end{remark}
%%=====================================
\section{Universal Methods}
\label{unimeth}
%%=====================================
In this section we consider problem
\begin{equation}
 \label{eq:UM-Problem}
 \min_{\x\in Q \subseteq E} f(\x),
\end{equation}
where $Q$ is a convex set and $f$ is a convex function with H\"older-continuous subgradient
\begin{equation}
 \label{eq:UM-Hoeld-Cond}
 \|\nabla f(\x_1) - \nabla f(\x_2)\|_* \leq L_{\nu}\|\x_1-\x_2\|^{\nu}
\end{equation}
with $\nu \in [0,1]$. The case $\nu=0$ corresponds to non-smooth optimization and the case $\nu=1$ corresponds to smooth optimization. The goal of this section is to present the Universal Accelerated Gradient method first proposed by Nesterov \cite{nesterov2015universal}. This method is a black-box method which does not require the knowledge of constants $\nu,L_{\nu}$ and works in accordance with the lower complexity bound $O\left(\left(\frac{L_{\nu}R^{1+\nu}}{\ee}\right)^{\frac{2}{1+3\nu}}\right)$ obtained in \cite{nemirovsky1983problem}.

The main idea is based on the observation that a non-smooth convex function can be upper bounded by a quadratic objective function slightly shifted above. More precisely, for any $\x,\y \in Q$,
\begin{align}
 f(\y) &\leq f(\x) + \iprod{\nabla f(\x)}{\y-\x} + \frac{L_{\nu}}{1+\nu}\|\y-\x\|^{1+\nu}\notag \\
 &\leq f(\x) + \iprod{\nabla f(\x)}{\y-\x} + \frac{L(\delta)}{2}\|\y-\x\|^{2} + \delta, \label{eq:UM-Upper-Bound}
\end{align}
where
\[
L(\delta) = \left(\frac{1-\nu}{1+\nu}\frac{1}{\delta} \right)^{\frac{1-\nu}{1+\nu}} L_{\nu}^{\frac{2}{1+\nu}}.
\]

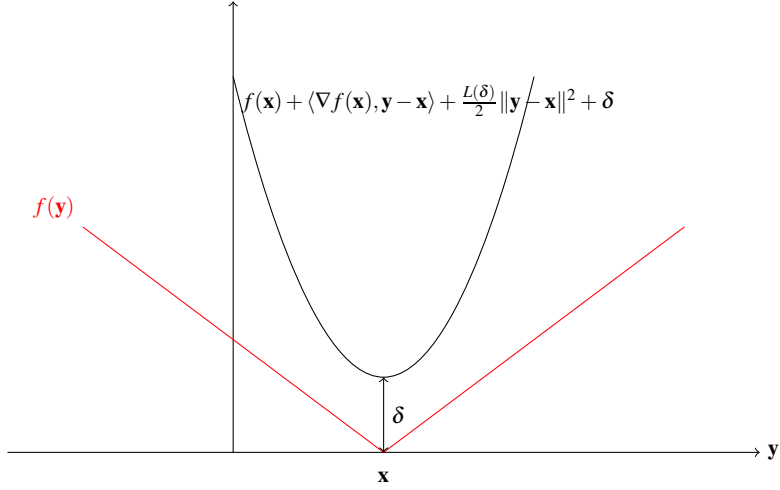
\begin{figure}
\centering
\begin{tikzpicture}[scale=1]
 \coordinate (y) at (-2,6);
 \coordinate (x) at (5,0);
 \draw[<->] (y) node[above] {} -- (-2,0) -- (x) node[right]
 {$\y$};
	\draw[-] (-5,0) -- (-2,0);

 \draw (0,-0.5) node[above] {$\x$};

 \draw (-2,5) node[below right] {$f(\x) + \iprod{\nabla f(\x)}{\y-\x} + \frac{L(\delta)}{2}\|\y-\x\|^{2} + \delta$} parabola bend (0,1) (2,5);
		
		\draw[red,-] (-4,3) node[above left] {{\color{red}$f(\y)$}} -- (0,0);
		\draw[red,-] (4,3) -- (0,0);
		
	\draw[<->] (0,0) -- (0,1);
	
	\draw (0,0.5) node[right] {$\delta$};
		
 \end{tikzpicture}
 \caption{Quadratic majorant of a non-smooth function $f(\x)$.}
\end{figure}

The next idea is to apply an accelerated gradient method with backtracking procedure to adapt for the unknown $L(\delta)$ with appropriately chosen $\delta$. The method we present is based on accelerated gradient method from \cite{dvurechensky2017adaptive,dvurechensky2018computational} and, thus is different from the original method of \cite{nesterov2015universal}.

\begin{algorithm}[h!]
\caption{Universal Accelerated Gradient Method}
\label{Alg:UM-UAGM}
{\small
\begin{algorithmic}[1]
		\REQUIRE Accuracy $\ee$, starting point $\x^0 \in Q$, initial guess $L_0 >0$, prox-setup: $d(\x)$ -- $1$-strongly convex w.r.t. $\|\cdot\|_{E}$, $V[\z] (\x):= d(\x) - d(\z) - \la \nabla d(\z), \x - \z \ra$.
		\STATE Set $k=0$, $C_0=\alpha_0=0$, $\y^0=\z^0=\x^0$.
		\FOR{$k=0,1,...$}
			\STATE Set $M_k=L_k/2$.
			\REPEAT
				\STATE Set $M_k=2M_k$, find $\alpha_{k+1}$ as the largest root of the equation
				\begin{equation}
				C_{k+1}:=C_k+\alpha_{k+1} = M_k\alpha_{k+1}^2.
				\label{eq:UM-alpQuadEq}
				\end{equation}
				\STATE
				\begin{equation}
				\x^{k+1} = \frac{\alpha_{k+1}\z^{k} + C_k \y^{k}}{C_{k+1}}.
				\label{eq:UM-lambdakp1Def}
				\end{equation}
				\STATE
				\begin{equation}
				\z^{k+1}=\arg \min_{\x \in Q} \{V[\z^{k}](\x) + \alpha_{k+1}(f(\x^{k+1}) + \langle \nabla f(\x^{k+1}), \x - \x^{k+1} \rangle) \}.
				\label{eq:UM-zetakp1Def}
				\end{equation}
				\STATE
				\begin{equation}
				\y^{k+1} = \frac{\alpha_{k+1}\z^{k+1} + C_k \y^{k}}{C_{k+1}}.
				\label{eq:UM-etakp1Def}
				\end{equation}
			\UNTIL{
			\begin{equation}
			f(\y^{k+1}) \leq f(\x^{k+1}) + \la \nabla f(\x^{k+1}),\y^{k+1} - \x^{k+1} \ra +\frac{M_k}{2}\|\y^{k+1} - \x^{k+1}\|^2 + \frac{\alpha_{k+1} \ee}{2C_{k+1}}.
			\label{eq:UM-lipConstCheck}
			\end{equation}}
			\STATE Set $L_{k+1}=M_k/2$, $k=k+1$.
		\ENDFOR
		\ENSURE The point $\y^{k+1}$.	
\end{algorithmic}
}
\end{algorithm}

Inequality \eqref{eq:UM-Upper-Bound} guarantees that the backtracking procedure in the inner cycle is finite.
\begin{theorem}[\cite{nesterov2015universal}]
\label{Th:UM-Main}
Let $f$ satisfy \eqref{eq:UM-Hoeld-Cond}. Then,
\begin{equation}
 \label{eq:UM-Conv-Rate}
 f(\y^{k+1})-f_\star \leq \left(\frac{2^{2+4\nu}L_{\nu}^2}{\ee^{1-\nu}k^{1+3\nu}}\right)^{\frac{1}{1+\nu}} V[\x^0](\x^\star) + \frac{\ee}{2}.
\end{equation}
Moreover, the number of oracle calls is bounded by
\[
4(k+1) + 2 \log_2 \left((2V[\x^0](\x^\star))^{\frac{1-\nu}{1+3\nu}} \left(\frac{1}{\ee}\right)^{\frac{3(1-\nu)}{1+3\nu}}L_{\nu}^{\frac{4}{1+3\nu}} \right).
\]
\end{theorem}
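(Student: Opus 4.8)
The plan is to treat Algorithm~\ref{Alg:UM-UAGM} as the backtracking counterpart of the accelerated scheme behind Theorem~\ref{Th:Sm-ng-AGM-Conv}, with the role of the exact descent inequality played by the inexact test \eqref{eq:UM-lipConstCheck}, and to carry the accumulated slack through the estimating-sequence telescoping. First I would establish a one-step inequality. Writing $\ell_{k+1}(\x) = f(\x^{k+1}) + \iprod{\nabla f(\x^{k+1})}{\x - \x^{k+1}}$ for the linearization (so $\ell_{k+1}(\x)\le f(\x)$ by convexity), I would use the identities $\y^{k+1}-\x^{k+1} = \frac{\alpha_{k+1}}{C_{k+1}}(\z^{k+1}-\z^k)$, which follows from \eqref{eq:UM-lambdakp1Def} and \eqref{eq:UM-etakp1Def}, and $M_k\alpha_{k+1}^2 = C_{k+1}$ from \eqref{eq:UM-alpQuadEq}, to rewrite the quadratic term in \eqref{eq:UM-lipConstCheck} as $\frac{M_k}{2}\|\y^{k+1}-\x^{k+1}\|^2 = \frac{1}{2C_{k+1}}\|\z^{k+1}-\z^k\|^2$. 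Multiplying \eqref{eq:UM-lipConstCheck} by $C_{k+1}$ and using affinity of $\ell_{k+1}$ together with $C_{k+1}\y^{k+1} = \alpha_{k+1}\z^{k+1}+C_k\y^k$, this yields
\[
C_{k+1} f(\y^{k+1}) \le C_k \ell_{k+1}(\y^k) + \alpha_{k+1}\ell_{k+1}(\z^{k+1}) + \tfrac12\|\z^{k+1}-\z^k\|^2 + \tfrac{\alpha_{k+1}\ee}{2}.
\]

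Next I would invoke the optimality of the prox-step \eqref{eq:UM-zetakp1Def}. The standard three-point property of the Bregman divergence, with $1$-strong convexity of $d$ absorbing the term $V[\z^k](\z^{k+1})\ge\frac12\|\z^{k+1}-\z^k\|^2$, gives for every $\x\in Q$
\[
\alpha_{k+1}\ell_{k+1}(\z^{k+1}) + \tfrac12\|\z^{k+1}-\z^k\|^2 \le \alpha_{k+1}\ell_{k+1}(\x) + V[\z^k](\x) - V[\z^{k+1}](\x).
\]
Combining the two displays and using $\ell_{k+1}(\y^k)\le f(\y^k)$ and $\ell_{k+1}(\x)\le f(\x)$ produces the contraction
\[
C_{k+1} f(\y^{k+1}) \le C_k f(\y^k) + \alpha_{k+1} f(\x) + V[\z^k](\x) - V[\z^{k+1}](\x) + \tfrac{\alpha_{k+1}\ee}{2}.
\]
Summing from $0$ to $k-1$ with $C_0=0$, $\z^0=\x^0$, and $\sum_{i<k}\alpha_{i+1}=C_k$, and setting $\x=\x^\star$, the objective and Bregman terms telescope, so after dropping $-V[\z^k](\x^\star)\le0$ I obtain $f(\y^k)-f_\star \le \frac{V[\x^0](\x^\star)}{C_k} + \frac{\ee}{2}$.

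It then remains to lower-bound $C_k$, which is the crux. Inequality \eqref{eq:UM-Upper-Bound} shows that the test \eqref{eq:UM-lipConstCheck} holds as soon as $M_k\ge L(\delta)$ with $\delta\le\frac{\alpha_{k+1}\ee}{2C_{k+1}}$; since $M_k$ is only doubled, backtracking terminates with $M_k\le 2L(\delta_k)$, $\delta_k=\frac{\alpha_{k+1}\ee}{2C_{k+1}}$, which also certifies finiteness of the inner loop. From $C_{k+1}=M_k\alpha_{k+1}^2$ one gets $\sqrt{C_{k+1}}-\sqrt{C_k}\ge\frac{1}{2\sqrt{M_k}}$, hence $\sqrt{C_k}\ge\frac12\sum_{i<k}M_i^{-1/2}$; substituting the explicit form of $L(\delta)$ into $M_k\le 2L(\delta_k)$ and resolving the implicit dependence between $M_k$ and $\delta_k$ gives the explicit bound $C_k\ge\big(\frac{\ee^{1-\nu}k^{1+3\nu}}{2^{2+4\nu}L_\nu^2}\big)^{1/(1+\nu)}$, which plugged into the previous display yields exactly \eqref{eq:UM-Conv-Rate}. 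For the oracle count I would amortize the inner iterations: each trial costs roughly two evaluations (a value and gradient at $\x^{k+1}$, and the value $f(\y^{k+1})$), and the rule $L_{k+1}=M_k/2$ couples consecutive doublings so that $\log_2 L_{k+1} = \log_2 L_k + i_k - 2$, where $i_k$ is the number of trials at step $k$. Summing telescopes the logarithms to $\sum_{j=0}^{k} i_j \le 2(k+1) + \log_2(L_{\max}/L_0)$, and bounding $L_{\max}\le 2L(\delta_{\min})$ by the same H\"older computation turns this into the stated $4(k+1)+2\log_2(\cdots)$.

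The main obstacle I anticipate is the $C_k$ lower bound: disentangling the coupled dependence between $M_k$ and $\delta_k=\frac{\alpha_{k+1}\ee}{2C_{k+1}}$ and pushing the H\"older algebra in $L(\delta)$ through the recursion $\sqrt{C_{k+1}}-\sqrt{C_k}\ge(2\sqrt{M_k})^{-1}$ to land on the precise exponents $\frac{1}{1+\nu}$ and $k^{1+3\nu}$ is the delicate, $\nu$-dependent part; everything preceding it is the routine accelerated-gradient telescoping.
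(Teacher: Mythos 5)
The paper does not actually prove Theorem \ref{Th:UM-Main}: it imports the statement from \cite{nesterov2015universal}, remarking only that \eqref{eq:UM-Upper-Bound} guarantees finiteness of the backtracking loop, so there is no in-paper proof to compare against. Your outline reconstructs the standard argument behind that result, adapted to the variant of the accelerated scheme actually used here (the same recursion as Algorithm \ref{Alg:Sm-ng-AGM}), and the steps you carry out check out: the identity $\y^{k+1}-\x^{k+1}=\frac{\alpha_{k+1}}{C_{k+1}}(\z^{k+1}-\z^{k})$, the rewriting of \eqref{eq:UM-lipConstCheck} as a per-step estimate via affinity of $\ell_{k+1}$ and $C_{k+1}=M_k\alpha_{k+1}^2$, the three-point inequality for \eqref{eq:UM-zetakp1Def} with $1$-strong convexity absorbing $\tfrac12\|\z^{k+1}-\z^k\|^2$, the telescoping to $f(\y^k)-f_\star\le V[\x^0](\x^\star)/C_k+\ee/2$, the exit bound $M_k\le 2L(\delta_k)$ from the doubling rule, the recursion $\sqrt{C_{k+1}}-\sqrt{C_k}\ge (2\sqrt{M_k})^{-1}$, and the oracle count via $\log_2 L_{k+1}=\log_2 L_k+i_k-2$. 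The one genuine incompleteness is the step you yourself flag: you assert, rather than derive, the lower bound $C_k\ge\bigl(\ee^{1-\nu}k^{1+3\nu}/(2^{2+4\nu}L_\nu^2)\bigr)^{1/(1+\nu)}$ from the coupled relations $M_k\le 2L\bigl(\alpha_{k+1}\ee/(2C_{k+1})\bigr)$ and $C_{k+1}=M_k\alpha_{k+1}^2$. That computation does go through (use $C_{k+1}/\alpha_{k+1}=\sqrt{M_kC_{k+1}}$ to eliminate $\alpha_{k+1}$, solve the resulting inequality for $M_k$ in terms of $C_{k+1}$ and the exponent $\tfrac{1+3\nu}{2(1+\nu)}$, and feed it into the square-root recursion), and it is precisely the content of the corresponding lemma in \cite{nesterov2015universal}; a related subtlety you gloss over but should state is that $\delta_k$ itself changes between backtracking trials, and one needs the monotonicity of $M\mapsto L(\delta_k(M))$ (sublinear growth, since $\tfrac{1-\nu}{2(1+\nu)}<1$) both to terminate the inner loop and to justify $M_k\le 2L(\delta_k)$ at exit. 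Until those two computations are written out, what you have is a correct and well-targeted outline rather than a complete proof.
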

Translating this rate of convergence to the language of complexity, we obtain that to obtain a solution with an accuracy $\ee$ the number of iterations is no more than
\[
O\left( \inf_{\nu \in [0,1]} \left(\frac{L_{\nu}}{\ee}\right)^{\frac{2}{1+3\nu}} \left(V[\x^0](\x^\star)\right)^{\frac{1+\nu}{1+3\nu}} \right),
\]
i.e. is optimal.

In his paper, Nesterov considers a more general composite optimization problem
\begin{equation}
 \label{eq:UM-Problem-Comp}
 \min_{\x\in Q \subseteq E} f(\x) + h(\x),
\end{equation}
where $h$ is a simple convex function, and obtains the same complexity guarantees. Universal methods were extended for the case of strongly convex problems by a restart technique in \cite{roulet2017sharpness}, for non-convex optimization in \cite{ghadimi2015generalized} and for the case of non-convex optimization with inexact oracle in \cite{dvurechensky2017gradient}.
As we can see from \eqref{eq:UM-Upper-Bound}, universal accelerated gradient method is connected to smooth problems with inexact oracle. The study of accelerated gradient methods with inexact oracle was first proposed in \cite{aspremont2008smooth} and was very well developed in \cite{devolder2014first,dvurechensky2016stochastic,bogolubsky2016learning,dvurechensky2017gradient} including stochastic optimization problems and strongly convex problems. A universal method with inexact oracle can be found in \cite{dvurechensky2017universal}.
Experiments show \cite{nesterov2015universal} that universal method accelerates to $O\left( \frac{1}{k}\right)$ rate for non-smooth problems with a special "smoothing friendly" (see Section \ref{black-box}) structure. This is especially interesting for traffic flow modeling problems, which possess such structure \cite{baimurzina2017universal}.

%\subsection{Universal Mirror Prox}

Now we consider universal analog of A.S. Nemirovski\rq s proximal mirror method
for variational inequalities with a Holder-continuous operator. More precisely, we consider universal extension of Algorithm \ref{Alg:Sm-ng-UMP} which allows to solve smooth and non-smooth variational inequalities without the prior knowledge of the smoothness. Main idea of the this method is the adaptive choice of constants and level of smoothness in minimized prox-mappings at each iteration.
These constants are related to the H\"older constant of the operator and this method allows to find a suitable constant at each iteration.

\begin{algorithm}[ht]
\caption{Universal Mirror Prox}
\label{Alg:UMP}
\begin{algorithmic}[1]
 \REQUIRE General VI on a set $Q \subset E$ with operator $\Phi(\z)$, accuracy $\e > 0$, initial guess $M_{-1} >0$, prox-setup: $d(\z)$, $V[\z] (\bw)$.
 \STATE Set $k=0$, $\z^0 = \arg \min_{\z \in Q} d(\z)$.
 \FOR{$k=0,1,...$}
 \STATE Set $i_k=0$
			\REPEAT
				\STATE Set $M_k=2^{i_k-1}M_{k-1}$.
		\STATE Calculate
				\begin{equation}
				\bw^k={\mathop {\arg \min }\limits_{\z\in Q}}\left\{\la \Phi(\z^k),\z \ra + M_{k}V[\z^k](\z) 		 \right\}.
				\label{eq:UMPwStep}
				\end{equation}
				\STATE Calculate
				\begin{equation}
				\z^{k+1}={\mathop {\arg \min }\limits_{\z\in Q}} \left\{\la \Phi(\bw^k),\z \ra + M_{k}V[\z^k](\z) 		 \right\}.
				\label{eq:UMPzStep}
				\end{equation}

	\STATE $i_k=i_k+1$.
			\UNTIL{
			\begin{equation}
			\la \Phi(\bw^k) - \Phi(\z^k), \bw^k - \z^{k+1} \ra
 \leq \frac{M_k}{2}\left(\|\bw^k-\z^k\|^2 + \|\bw^k-\z^{k+1}\|^2\right) + \frac{\e}{2}.
			\label{eq:UMPCheck}
			\end{equation}}
					\STATE Set $k=k+1$.
	\ENDFOR
		\ENSURE $\widehat{\bw}^k = \frac{1}{k}\sum_{i=0}^{k-1}\bw^i$.
\end{algorithmic}
\end{algorithm}

\newpage

\begin{theorem}[\cite{dvurechensky2018generalized}]
\label{Th:UMPGenRate}
For any $k \geq 1$ and any $\z \in Q$,
\begin{equation}
\label{eq:UMPGenRate}
\frac{1}{\sum_{i=0}^{k-1}M_i^{-1}} \sum_{i=0}^{k-1} M_i^{-1} \la \Phi(\bw^i), \bw^i - \z \ra \leq
 \frac{1}{\sum_{i=0}^{k-1}M_i^{-1}} (V[\z^0](\z)-V[\z^{k}](\z)) + \frac{\e}{2}.
\end{equation}
\end{theorem}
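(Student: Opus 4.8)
The plan is to establish a single per-iteration estimate of the form
\[
M_k^{-1}\la\Phi(\bw^k),\bw^k-\z\ra \leq V[\z^k](\z)-V[\z^{k+1}](\z)+\frac{\e}{2M_k}
\]
and then sum it telescopically. The basic tool is the variational inequality for the Bregman proximal step: if $\bw=\arg\min_{\z\in Q}\{\la\bp,\z\ra+V[\x](\z)\}$, then the optimality condition $\la\bp+\nabla d(\bw)-\nabla d(\x),\bu-\bw\ra\geq 0$ combined with the three-point identity for the Bregman divergence gives, for every $\bu\in Q$,
\[
\la\bp,\bw-\bu\ra \leq V[\x](\bu)-V[\bw](\bu)-V[\x](\bw).
\]
I would first record this as a preliminary lemma.

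Next I would apply this inequality twice at iteration $k$, both times with prox-center $\z^k$. For the step \eqref{eq:UMPwStep} defining $\bw^k$, i.e. with $\bp=M_k^{-1}\Phi(\z^k)$, I evaluate it at $\bu=\z^{k+1}$; for the step \eqref{eq:UMPzStep} defining $\z^{k+1}$, i.e. with $\bp=M_k^{-1}\Phi(\bw^k)$, I evaluate it at an arbitrary $\bu=\z$. The algebraic heart of the argument is the decomposition
\begin{align*}
M_k^{-1}\la\Phi(\bw^k),\bw^k-\z\ra &= M_k^{-1}\la\Phi(\bw^k),\z^{k+1}-\z\ra + M_k^{-1}\la\Phi(\z^k),\bw^k-\z^{k+1}\ra \\
&\quad + M_k^{-1}\la\Phi(\bw^k)-\Phi(\z^k),\bw^k-\z^{k+1}\ra,
\end{align*}
where the first summand is bounded by the $\z^{k+1}$-step inequality and the second by the $\bw^k$-step inequality.

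Summing those two Bregman bounds, the two copies of $V[\z^k](\z^{k+1})$ cancel, leaving the telescoping pair $V[\z^k](\z)-V[\z^{k+1}](\z)$ together with the negative terms $-V[\z^k](\bw^k)-V[\bw^k](\z^{k+1})$. The remaining third summand is precisely what the stopping criterion \eqref{eq:UMPCheck} controls: dividing that criterion by $M_k$ bounds it by $\frac12(\|\bw^k-\z^k\|^2+\|\bw^k-\z^{k+1}\|^2)+\frac{\e}{2M_k}$. Invoking $1$-strong convexity of $d$, which yields $V[\z^k](\bw^k)\geq\frac12\|\bw^k-\z^k\|^2$ and $V[\bw^k](\z^{k+1})\geq\frac12\|\bw^k-\z^{k+1}\|^2$, the two quadratic terms are exactly absorbed by the negative Bregman terms, producing the per-step estimate displayed above.

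Finally I would sum this estimate over $i=0,\dots,k-1$: the Bregman terms telescope to $V[\z^0](\z)-V[\z^{k}](\z)$, the slack terms sum to $\frac{\e}{2}\sum_{i=0}^{k-1}M_i^{-1}$, and dividing through by $\sum_{i=0}^{k-1}M_i^{-1}$ yields \eqref{eq:UMPGenRate}. I expect the main obstacle to be this cancellation step: one must verify that the backtracking rule \eqref{eq:UMPCheck} is calibrated so that its quadratic right-hand side is matched exactly by the negative Bregman divergences released by strong convexity, leaving only the controllable slack $\frac{\e}{2M_k}$. Note that monotonicity of $\Phi$ is never used in this estimate; it enters only afterwards, when \eqref{eq:UMPGenRate} is converted into a duality-gap bound as in Theorem \ref{Th:MP-MPGenRate}.
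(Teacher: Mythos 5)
Your proof is correct and is essentially the standard Mirror Prox argument that the paper defers to the cited reference \cite{dvurechensky2018generalized}: two applications of the Bregman prox-step inequality at prox-center $\z^k$, the three-term decomposition of $\la\Phi(\bw^k),\bw^k-\z\ra$, absorption of the quadratic right-hand side of the stopping criterion \eqref{eq:UMPCheck} by the released terms $V[\z^k](\bw^k)$ and $V[\bw^k](\z^{k+1})$ via $1$-strong convexity of $d$, and telescoping with weights $M_i^{-1}$. Your closing observation that monotonicity of $\Phi$ is not needed for \eqref{eq:UMPGenRate} itself, only for the subsequent gap bound, is also accurate.
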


Note that if $\max_{\z \in Q} V[\z^0](\z) \leq D$, we can construct the following adaptive stopping criterion for our algorithm
$$
\frac{D}{\sum_{i=0}^{k-1}M_i^{-1}} \leq \frac{\e}{2}.
$$

Next, we consider the case of H\"older-continuous operator $\Phi$ and
show that Algorithm \ref{Alg:UMP} is universal.
Assume for some $\nu \in [0,1]$ and $L_{\nu} \geq 0$
$$
\|\Phi(\x) - \Phi(\y)\|_* \leq L_{\nu}\|\x-\y\|^{\nu}, \quad \x,\y \in Q.
$$
holds.
The following inequality is a generalization of \eqref{eq:UM-Upper-Bound} for VI. For any $\x,\y,\z \in Q$ and $\delta > 0$,
$$
	\la \Phi(\y) - \Phi(\x), \y-\z \ra \leq \|\Phi(\y) - \Phi(\x) \|_* \|\y-\z\| \leq L_{\nu}\|\x-\y\|^{\nu} \|\y-\z\| \leq
$$
$$
\leq \frac{1}{2}\left(\frac{1}{\delta}\right)^{\frac{1-\nu}{1+\nu}} L_{\nu}^{\frac{2}{1+\nu}} \left(\|\x-\y\|^2+\|\y-\z\|^2\right) + \frac{\delta}{2},
$$
where
\begin{equation}
\label{eq:Lofd}
L(\delta) = \left(\frac{1}{\delta}\right)^{\frac{1-\nu}{1+\nu}} L_{\nu}^{\frac{2}{1+\nu}}.
\end{equation}
So, we have
\begin{equation}
	\label{eq:g_or_def}
	\la \Phi(\y) - \Phi(\x), \y - \z \ra \leq \frac{L(\delta)}{2}\left(\|\y-\x\|^2 + \|\y-\z\|^2\right) + \delta.
\end{equation}

Let us consider estimates of the necessary number of iterations are obtained to achieve a given quality of the variational inequality solution.

\begin{corollary}[Universal Method for VI]
\label{Cor:UMPHoldUniv}
Assume that the operator $\Phi$ is H\"older continuous with constant $L_{\nu}$ for some $\nu \in [0,1]$ and $M_{-1} \leq \left(\frac{2}{\e}\right)^{\frac{1-\nu}{1+\nu}} L_{\nu}^{\frac{2}{1+\nu}}$. Also assume that the set $Q$ is bounded. Then, for all $k \geq 0$, we have
\begin{equation}
\label{eq:UMPRate}
\max_{\z\in Q} \la \Phi(\z), \widehat{\bw}_k - \z \ra \leq \frac{(2 L_{\nu})^{\frac{2}{1+\nu}}}{ k \e^{\frac{1-\nu}{1+\nu}}} \max_{\z\in Q}V[\z^0](\z) + \frac{\e}{2}
\end{equation}
\end{corollary}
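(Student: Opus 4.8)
The plan is to obtain \eqref{eq:UMPRate} by combining the general convergence estimate \eqref{eq:UMPGenRate} of Theorem~\ref{Th:UMPGenRate} with a uniform upper bound on the adaptively chosen parameters $M_i$. The H\"older continuity of $\Phi$ enters only through inequality \eqref{eq:g_or_def}, which pins down exactly when the inner backtracking criterion \eqref{eq:UMPCheck} must hold; monotonicity of $\Phi$ (the standing assumption of the Mirror Prox scheme) then turns the bound on the weighted linearized gap into the desired bound on $\max_{\z\in Q}\la \Phi(\z), \widehat{\bw}_k - \z\ra$.

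First I would control the backtracking parameters. Choosing $\delta = \e/2$ in \eqref{eq:g_or_def} with $(\y,\x,\z)=(\bw^k,\z^k,\z^{k+1})$ shows that the stopping condition \eqref{eq:UMPCheck} is automatically satisfied as soon as $M_k \ge L(\e/2)$, where by \eqref{eq:Lofd} and a one-line simplification
\[
L(\e/2) = \left(\tfrac{2}{\e}\right)^{\frac{1-\nu}{1+\nu}} L_{\nu}^{\frac{2}{1+\nu}}, \qquad 2L(\e/2) = (2L_{\nu})^{\frac{2}{1+\nu}}\,\e^{-\frac{1-\nu}{1+\nu}} =: \bar{M}.
\]
Since each inner loop starts from the trial value $M_{k-1}/2$ and only doubles, I would argue by induction on $k$: the hypothesis $M_{-1}\le L(\e/2)$ gives the base case, and if $M_{k-1}\le \bar{M}$ then the first trial $M_{k-1}/2\le L(\e/2)$, so the first trial reaching $L(\e/2)$ is accepted and never exceeds twice the previous (sub-threshold) trial, whence $M_k \le \bar{M}$. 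In particular the inner loop terminates after finitely many doublings, and $M_i \le \bar M$ for every $i$.

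Then I would substitute this into \eqref{eq:UMPGenRate}. Monotonicity gives $\la \Phi(\z), \bw^i - \z\ra \le \la \Phi(\bw^i), \bw^i - \z\ra$, and since the map $\bw \mapsto \la \Phi(\z), \bw - \z\ra$ is linear the weighted left-hand side of \eqref{eq:UMPGenRate} equals $\la \Phi(\z), \widehat{\bw}_k - \z\ra$ with $\widehat{\bw}_k = \big(\sum_{i=0}^{k-1}M_i^{-1}\big)^{-1}\sum_{i=0}^{k-1}M_i^{-1}\bw^i$. Dropping the nonnegative term $V[\z^{k}](\z)$ and using $M_i\le \bar M \Rightarrow \sum_{i=0}^{k-1}M_i^{-1}\ge k\bar M^{-1}$, I obtain
\[
\la \Phi(\z), \widehat{\bw}_k - \z\ra \le \frac{\bar M}{k}\,V[\z^0](\z) + \frac{\e}{2} = \frac{(2L_{\nu})^{\frac{2}{1+\nu}}}{k\,\e^{\frac{1-\nu}{1+\nu}}}\,V[\z^0](\z) + \frac{\e}{2}.
\]
Bounding $V[\z^0](\z)\le \max_{\z\in Q}V[\z^0](\z)$, which is finite because $Q$ is bounded, and taking $\max_{\z\in Q}$ on the left yields \eqref{eq:UMPRate}.

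The hard part is the uniform bound $M_k\le\bar M$: it rests on the inductive invariant together with the hypothesis on $M_{-1}$, and it is precisely what makes the method \emph{universal}, since the algorithm never uses $\nu$ or $L_{\nu}$. A second point to handle carefully is the averaging, because the quantity genuinely controlled by \eqref{eq:UMPGenRate} is the $M_i^{-1}$-weighted average of the $\bw^i$ rather than the uniform average; I would therefore read $\widehat{\bw}_k$ in the corollary as this weighted iterate (the two coincide when the $M_i$ are constant).
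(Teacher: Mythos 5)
Your proof is correct and follows essentially the same route as the paper: use \eqref{eq:g_or_def} with $\delta=\e/2$ to show the backtracking test \eqref{eq:UMPCheck} must pass once $M_k\ge L(\e/2)$, deduce $M_i\le 2L(\e/2)=(2L_\nu)^{2/(1+\nu)}\e^{-(1-\nu)/(1+\nu)}$ for all $i$, and plug this into Theorem~\ref{Th:UMPGenRate}. You are in fact more explicit than the paper on two points it leaves implicit --- the use of monotonicity to pass from $\la\Phi(\bw^i),\bw^i-\z\ra$ to $\la\Phi(\z),\widehat{\bw}^k-\z\ra$, and the fact that $\widehat{\bw}^k$ must be read as the $M_i^{-1}$-weighted average rather than the uniform average written in the algorithm's output line --- and both observations are accurate.
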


As it follows from \eqref{eq:g_or_def}, if $M_k \geq L(\frac{\e}{2})$, \eqref{eq:UMPCheck} holds. Thus, for all $i = 0,..., k-1$, we have $M_i \leq 2\cdot L(\frac{\e}{2})$ and
$$
\frac{1}{\sum_{i=0}^{k-1}M_i^{-1}} \leq \frac{2L(\frac{\e}{2})}{k} \leq \frac{(2 L_{\nu})^{\frac{2}{1+\nu}}}{ k\e^{\frac{1-\nu}{1+\nu}}},
$$
\eqref{eq:UMPRate} holds. Here $L(\cdot)$ is defined in \eqref{eq:Lofd}.
\qed

Let us add some remarks.

\begin{remark}
Since the algorithm does not use the values of parameters $\nu$ and $L_{\nu}$, we obtain the following iteration complexity bound
$$
2 \inf_{\nu\in[0,1]}\left(\frac{2L_{\nu}}{\e} \right)^{\frac{2}{1+\nu}} \cdot \max_{\z\in Q}V[\z_0](\z)
$$
to achieve $$\max_{\z\in Q} \la \Phi(\z), \widehat{\bw}_k - \z \ra \leq \e.$$
\end{remark}

%%%
Using the same reasoning as in \cite{nesterov2015universal}, we estimate the number of oracle calls for Algorithm \ref{Alg:UMP}. 
The number of oracle calls on each iteration $k$ is equal to $2i_k$. At the same time, $M_k=2^{i_k-2}M_{k-1}$ and, hence, $i_k = 2+ \log_2\frac{M_k}{M_{k-1}}$.
%Let by the $(i+1)$ iteration this number was $m_{i+1}$. Then, because of dividing by 2 in the i. 1 of the algorithm \ref{Alg:UMP} for all $i = 0, 1, 2, ...$ we have:
%\begin{equation}\label{equiv_iter_1}
%M_{i+1} = \frac{1}{2} 2^{m_{i+1}-1} M_{i} = 2^{m_{i+1} - 2} M_{i},
%\end{equation}
%whence taking into account two oracle calls for the field $g$ in \eqref{eq:UMPwStep} -- \eqref{eq:UMPzStep}
Thus, the total number of oracle calls is
\begin{equation}\label{equiv_iter_2}
\sum\limits_{j = 0}^{k-1} i_{j} = 4k + 2\sum\limits_{i = 0}^{k-1}\log_{2} \frac{M_{j}}{M_{j-1}} < 4k + 2 \log_{2} \left(2L\left(\frac{\e}{2}\right)\right) - 2 \log_{2} (M_{-1}),
\end{equation}
where we used that $M_{k} \leq 2 L(\frac{\e}{2})$.

Thus, the number of oracle calls of the Algorithm \ref{Alg:UMP} does not exceed:
$$
4 \inf_{\nu\in[0,1]}\left(\frac{2 \cdot L_{\nu}}{\e} \right)^{\frac{2}{1+\nu}} \cdot \max_{u\in Q}V[z_0](u) + 2 \inf_{\nu\in[0,1]} \log_2 2\left(\left(\frac{2}{\e}\right)^{\frac{1-\nu}{1+\nu}} L_{\nu}^{\frac{2}{1+\nu}}\right) - 2\log_2(M_{-1}).
$$
%%%

\begin{remark}
We can apply this method to convex-concave saddle problems of the form
\begin{equation}\label{eqv_sedlo}
f(\x, \y) \rightarrow \min\limits_{\x \in Q_1} \max\limits_{\y \in Q_2},
\end{equation}
where $Q_{1, 2}$ are convex compacts in $\mathbb{R}^n$, $f$ is convex in $\x$ and concave in $\y$,
there is $\nu\in[0,1]$ and constants $L_{11,\nu},L_{12,\nu},L_{21,\nu},L_{22,\nu} \textless +\infty$:
$$
\|\nabla_\x f(\x+\Delta \x, \y+\Delta \y) - \nabla_\x f(\x,\y)\|_{1,*}\leq L_{11,\nu}\| \Delta \x\|_{1}^{\nu}+L_{12,\nu}\| \Delta \y\|_{2}^{\nu},
$$
$$
\|\nabla_\y f(\x+\Delta \x, \y+\Delta \y) - \nabla_\y f(\x,\y)\|_{2,*}\leq L_{21,\nu}\| \Delta \x\|_{1}^{\nu}+L_{22,\nu}\| \Delta \y\|_{2}^{\nu}
$$
for all $\x, \x+\Delta \x\in Q_1, \y, \y+\Delta \y \in Q_2$.

It is possible to achieve an acceptable approximation
$(\widehat{\x}, \widehat{\y}) \in Q_1 \times Q_2$:
\begin{equation}\label{eqv_sedlo_qual}
\max_{\y\in Q_2} f(\widehat{\x},\y) - \min_{\x \in Q_1} f(\x,\widehat{\y}) \leq \varepsilon
\end{equation}
for the saddle point $(\x_*,\y_*) \in Q_1 \times Q_2$ of the \eqref{eqv_sedlo} problem in no more than
$$O\left(\left(\frac{1}{\varepsilon}\right)^{\frac{2}{1+\nu}}\right)$$
iterations, which indicates the optimality of the proposed method, at least for $\nu = 0$ and $\nu = 1$.
However, in practice experiments show that \eqref{eqv_sedlo_qual} can be achieved much faster due to the adaptability of the method.
\end{remark}
%%=====================================
\section{Concluding remarks}
\label{conclusion}
%%=====================================
Modern numerical methods for non-smooth convex optimization problems are typically based on the structure of the problem.
We start with one of the most powerful example of such type.
For geometric median search problem there exists efficient method
that significantly outperform described above lower complexity bounds \cite{cohen2016geometric}.
In Machine Learning we typically meet the problems with hidden affine structure and
small effective dimension (SVM) that allow us to use different smoothing techniques \cite{allen2016optimal}.
Description of one of these techniques (Nesterov's smoothing technique) one can find in this survey.
The other popular technique is based on averaging of the function around
the small ball with the center at the point in consideration \cite{duchi2012randomized}.
A huge amount of data since applications lead to composite optimization problems with non smooth composite (LASSO).
For this class of problems accelerated (fast) gradient methods are typically applied
\cite{beck2009fast}, \cite{nesterov2013gradient}, \cite{lan2016gradient}.
This approach (composite optimization) have been recently expanded for more general class of problems \cite{tyurin2017fast}.
In different Image Processing applications one can find a lot of non-smooth problems formulations with saddle-point structure.
That is the goal function has Legendre representation.
In this case one can apply special versions of accelerated (primal-dual) methods
\cite{chambolle2011first-order}, \cite{chen2014optimal}, \cite{lan2016accelerated}.
Universal Mirror Prox method described above demonstrates the alternative approach
which can be applied in rather general context.
Unfortunately, the most of these tricks have proven to be beyond the scope of this survey.
But we include in the survey the description of the Universal Accelerated Gradient
Descent algorithm
\cite{tyurin2017fast}
which in the general case can also be applied to a wide variety of problems.

Another important direction in Non-smooth Convex Optimization is
huge-scale optimization for sparse problems \cite{nesterov2014subgradient}.
The basic idea that reduce huge dimension to non-smoothness is as follows:
$$\iprod{\ba_k}{\x} - b_k \le 0, \quad k = 1,\ldots m, \quad m\gg 1 $$
is equivalent to the single non-smooth constraint:
$$\max_{k = 1,\ldots m}\left\{\iprod{\ba_k}{\x} - b_k \right\} \le 0.$$
We demonstrated this idea above on Truss Topology Design example.

One should note that we concentrate in this survey only on deterministic convex optimization problems,
but the most beautiful things in non smooth optimization is that stochasticity
\cite{nemirovsky1983problem}, \cite{duchi2016introductory}, \cite{juditsky2011firstI}, \cite{juditsky2011firstII}
and online context
\cite{hazan2016introduction}
in general doesn't change (up to a logarithmic factor in the strongly convex case)
anything in complexity estimates.
As an example, of stochastic (randomized) approach one can mentioned the work
\cite{anikin2015efficient}
where one can find reformulation of Google problem as non smooth convex optimization problem.
Special randomized Mirror Descent algorithm allows to solve this problem
almost independently on the number of vertexes.

Finally, let's note that in the decentralized distributed non smooth (stochastic) convex optimization
for the last few years there appear optimal methods
\cite{lan2017communication}, \cite{uribe2017optimal}, \cite{bubeckoptimal}.
\begin{acknowledgement}
The article was supported in its major parts by the grant 18-29-03071 mk from Russian Foundation for
Basic Research. E. Nurminski acknowledges the partial support from the project
1.7658.2017/6.7 of Ministry of Science and Higher Professional Education in Section \ref{exap:trans}.
The work of A. Gasnikov, P. Dvurechensky and F. Stonyakin in Section \ref{black-box} was partially supported by Russian
Foundation for Basic Research grant 18-31-20005 mol\_a\_ved.  The work of F. Stonyakin in Subsection \ref{S:CNSOF}
was supported by Russian Science Foundation grant 18-71-00048. 
\end{acknowledgement}
%\begin{acknowledgement}
%To all good people and especially to Russian Foundation for Basic Research
%grant 18-29-03071.
%\end{acknowledgement}
\bibliography{fullrefs}
\end{document}